\documentclass[a4paper,11pt,twoside]{article}

\usepackage[pdftex]{graphicx}

\usepackage{geometry,color,graphicx,mathtools,fixltx2e,microtype,lastpage,latexsym,dsfont, amsmath,amsfonts,amssymb,amsthm}

\geometry{a4paper,left=3cm,right=3cm,top=3cm,bottom=3cm}
\usepackage[pdftex,pagebackref=false]{hyperref}

\usepackage[pagebackref=false]{hyperref}

\usepackage{geometry,amsmath}
\usepackage{mathtools}
\usepackage{mathrsfs}
\usepackage{amssymb}
\usepackage{bm}
\usepackage{graphicx}
\usepackage[latin1]{inputenc}
\usepackage{amsfonts}
\usepackage{amsthm}
\usepackage{thmtools}
\usepackage{newlfont}
\usepackage{latexsym}
\usepackage{mathrsfs}
\usepackage[nottoc,notlot,notlof]{tocbibind}
\usepackage{bbm}
\usepackage{xcolor}
\usepackage{color}
\usepackage{titlesec}
\titlespacing{\section}{0pt}{*0.5}{*0.5}
\usepackage{parskip}
\usepackage{footmisc}

\usepackage{xr}
\externaldocument{auxiliary_file-appendix}

\makeatletter
\newcommand{\vast}{\bBigg@{3.5}}
\newcommand{\Vast}{\bBigg@{5}}
\makeatother

\theoremstyle{plain}
\newtheorem{thm}{Theorem}[section]
\newtheorem{cor}[thm]{Corollary}

\newtheorem{prop}[thm]{Proposition}

\theoremstyle{definition}

\newtheorem{notation}{Notation}[section]

\theoremstyle{remark}
\newtheorem{oss}{Remark}[section]

\renewcommand{\(}{\left(}
\renewcommand{\)}{\right)}
\renewcommand{\[}{\left[}
\renewcommand{\]}{\right]}
\renewcommand{\{}{\left\lbrace}
\renewcommand{\}}{\right\rbrace}

\newcommand{\norm}[1]{\left\lVert#1\right\rVert}
\newcommand{\abs}[1]{\left\lvert#1\right\rvert}

\newcommand{\eps}{\varepsilon}

\hyphenation{pro-per-ty}
\hyphenation{di-stan-ces}
\hyphenation{di-stri-bu-tio-nal}
\hyphenation{ge-ne-ral}

\numberwithin{equation}{section}

\theoremstyle{definition}

\def\R{\mathbb{R}}
\def\P{\mathbb{P}}

\def\NN{\mathcal{N}}

\def\Z{\mathbb{Z}}

\def\F{\mathscr{F} }

\def\DD{\mathbb{D} }

\def\A{\mathscr{A}}

\def\BB{\mathscr{B} }

\def\to{\rightarrow}

\def\1{\mathbbm{1}}

\def\Var{\operatorname{Var}}
\def\Tr{\operatorname{Tr}}

\def\r{\right\rangle}
\def\l{\left\langle}

\setcounter{secnumdepth}{4}
\setcounter{tocdepth}{4}

\titleformat{\paragraph}
{\normalfont\normalsize\bfseries}{\theparagraph}{1em}{}
\titlespacing*{\paragraph}
{0pt}{3.25ex plus 1ex minus .2ex}{1.5ex plus .2ex}

\begin{document}

\begin{center} {\bf\large An Improved Second Order Poincar\'e Inequality \\ for Functionals of Gaussian Fields}
\end{center}
 \begin{center}Anna Vidotto\footnote[1]{Università degli Studi di Roma Tor Vergata, Facoltà di Scienze Matematiche, Fisiche e Naturali, Dipartimento di Matematica\\
E-mail: vidottoanna@gmail.com \\
Keywords: central limit theorems; second order Poincar\'e inequalities; Gaussian approximation; isonormal Gaussian processes; functionals of Gaussian fields; Wigner matrices \\
AMS 2000 Classification: 60F05; 60G15; 60H07; 60B20 }
 \end{center}
\allowdisplaybreaks

\begin{abstract}  

We present an improved version of the second order Gaussian Poincar\'e inequality, firstly introduced in Chatterjee (2009) and Nourdin, Peccati and Reinert (2009). These novel estimates are used in order to bound distributional distances between functionals of Gaussian fields and normal random variables.
Several applications are developed, including quantitative CLTs for non-linear functionals of stationary Gaussian fields related to the Breuer-Major theorem, improving previous findings in the literature and obtaining presumably optimal rates of convergence.

\end{abstract}

  
\section{Introduction}

The aim of the present paper is to prove several new and refined \emph{second order Poincar\'e inequalities} for the normal approximation of general functionals of Gaussian fields, thus improving previous findings in the literature. Our main motivation is to tackle a problem left open in \cite{NPR:09}, namely proving second order estimates yielding presumably optimal rates of convergence for integral transforms of Gaussian subordinated fields
(see the discussion in \cite[Remark 4.3, Remark 6.2]{NPR:09}). 
In this paper, we will provide an explicit answer to such an open problem, by using a powerful tool, namely the Mehler representation of the Ornstein-Uhlenbeck semigroup, which was exploited in recent years to obtain second order Poincar\'e inequalities for Poisson and Rademacher functionals, providing presumably optimal rates of convergence (see \cite{LPS:16} and \cite{KRT:17}). 

We will illustrate our findings through a number of applications: to non-linear functionals of continuous-time and discrete-time Gaussian processes, including the example that led to the discussion in \cite[Remark 4.3, Remark 6.2]{NPR:09}, to non-linear positive functionals of Brownian sheets in arbitrary dimension, and, in order to show the flexibility of our results, to limit theorems for random matrices related to the Sinai and Soshnikov CLT \cite{SS:98}.


\subsection{Previous work and plan of the paper}

Let $N\sim \NN(0,1)$ be a standard Gaussian random variable. Second order Poincar\'e inequalities can be seen as an iteration of the so-called {\it Gaussian Poincar\'e inequality} (hence the name), which states that 
\begin{equation}\label{PG}
\Var f(N)\leq E\big[f'(N)^2\big]\,,
\end{equation}
for every differentiable function $f:\R\rightarrow\R$, a result that was discovered by J. Nash in \cite{Na:56} and then reproved by H. Chernoff in \cite{Ch:81}. 
The estimate \eqref{PG} implies that, if the random variable $f'(N)$ has a small $L^2$ norm, then $f(N)$ has small fluctuations.
The Gaussian Poincar\'e inequality holds in the much more general setting of functionals of Gaussian fields and associated Malliavin operators, see \cite{HP:95}:
\begin{equation}\label{PM}
\Var F\leq E\[\norm{DF}_H^2\],
\end{equation}
where $F=f(X)$ is a general functional of an isonormal Gaussian process $X$ over a Hilbert space $H$ and $D$ is the first Malliavin derivative (see Section \ref{GS} for rigorous definitions). 
The first version of a second order Poincar\'e inequality was presented in \cite{Ch:09}, where the author proved that
one can iterate \eqref{PG} in order to assess the total variation distance $d_{TV}$ between the law of $f(N)$ and the law of a Gaussian random variable with matching mean and variance. The precise result is the following (see Section \ref{GS} for the definition of total variation distance $d_{TV}$):

\bigskip

\begin{thm}[Second order Poincar\'e inequality -- \cite{Ch:09}]\label{ch_thm}
Let $X=(X_1,\dots,X_d)$ be a standard Gaussian vector in $\R^d$. Take any $f\in C^2(\R^d)$ and let $\nabla f$ and $\nabla^2f$ denote the gradient and Hessian of $f$. Suppose $f(X)$ has a finite fourth moment and let $\mu=E f(X)$, $\sigma^2=\Var f(X)$. Let $Z\sim\NN\(\mu,\sigma^2\)$, then
\begin{equation}\label{SPCh}
d_{TV}\(f(X),Z\)\leq\frac{2\sqrt{5}}{\sigma^2}\(E\norm{\nabla f(X)}_{\R^d}^4\)^{1/4}\(E\norm{\nabla^2 f(X)}_{op}^4\)^{1/4},
\end{equation}
where $\norm{\cdot}_{op}$ stands for the operator norm of $\nabla^2 f(X)$ regarded as a random $d\times d$ matrix.
\end{thm}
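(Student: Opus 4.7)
The plan is to combine Stein's method with Gaussian integration by parts via the Mehler (Ornstein--Uhlenbeck) semigroup, and then to iterate the Gaussian Poincar\'e inequality \eqref{PG} on the resulting Stein kernel. Writing $F := f(X)$, I would first invoke the standard Stein bound for total variation to $Z \sim \NN(\mu,\sigma^2)$:
\begin{equation*}
d_{TV}(F, Z) \leq \sup_{g} \bigl|E[\sigma^2 g'(F) - (F-\mu) g(F)]\bigr|,
\end{equation*}
where the supremum runs over solutions $g$ of the Stein equation associated with indicators $h = \1_A - P(Z\in A)$; here $\norm{g}_\infty$ and $\norm{g'}_\infty$ are controlled by explicit $\sigma$-dependent Stein factors.

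Next, for $F = f(X)$ I would rewrite $E[(F-\mu) g(F)]$ using the Mehler formula. Let $X'$ be an independent copy of $X$ and set $X_t := e^{-t} X + \sqrt{1-e^{-2t}}\, X'$; starting from $F - \mu = -\int_0^\infty \partial_t (P_t f)(X)\, dt$, performing Gaussian integration by parts, and applying the chain rule to $g \circ f$, one obtains
\begin{equation*}
E[(F-\mu) g(F)] = E\bigl[ g'(F)\, T(X)\bigr], \qquad T(X) := \int_0^\infty e^{-t}\, E\bigl[\langle \nabla f(X), \nabla f(X_t)\rangle_{\R^d} \bigm| X\bigr]\, dt.
\end{equation*}
Taking $g(x) = x - \mu$ in this identity yields $E[T(X)] = \sigma^2$, so the Stein bound becomes
\begin{equation*}
d_{TV}(F,Z) \leq \frac{C}{\sigma^2}\, E\bigl| T(X) - \sigma^2\bigr| \leq \frac{C}{\sigma^2}\, \sqrt{\Var T(X)},
\end{equation*}
by Jensen, with $C$ tracking the explicit Stein factor.

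The last step is to control $\Var T(X)$ by applying \eqref{PM} to $T$ viewed as a smooth functional of $X$. Differentiating under the integral in $T$ produces two terms: one with $\nabla^2 f(X)$ acting on $E[\nabla f(X_t)\mid X]$, one with $\nabla f(X)$ paired with $e^{-t} E[\nabla^2 f(X_t)\mid X]$. Bounding each operator--vector product pointwise by $\norm{\nabla^2 f}_{op}\cdot\norm{\nabla f}_{\R^d}$, using $(a+b)^2 \leq 2(a^2+b^2)$, integrating the $e^{-t}$ weights, and then applying Cauchy--Schwarz in expectation (exploiting the identity in law $X_t \eqd X$ to strip the conditional expectation down to ``diagonal'' fourth moments) yields
\begin{equation*}
\Var T(X) \leq C'\, \bigl(E\norm{\nabla f(X)}_{\R^d}^4\bigr)^{1/2}\bigl(E\norm{\nabla^2 f(X)}_{op}^4\bigr)^{1/2}.
\end{equation*}
Plugging this into the previous bound yields \eqref{SPCh}.

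The main obstacle is not conceptual but numerical: extracting the explicit constant $2\sqrt{5}$ requires using the sharp form of the Stein inequality for $d_{TV}$, applying $(a+b)^2\le 2(a^2+b^2)$ rather than the lossy $4(a^2+b^2)$ when splitting $\nabla T(X)$, and exploiting the rotational invariance of the joint Gaussian law of $(X, X_t)$ to pass from conditional expectations evaluated at the correlated point $X_t$ back to moments in $X$ alone. Every Cauchy--Schwarz must be applied at the right step, since any loss is carried through to the final constant.
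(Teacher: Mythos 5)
The paper does not reprove Theorem \ref{ch_thm}: it is quoted directly from \cite{Ch:09} as background, so there is no internal proof to compare against. That said, your proposal is a sound reconstruction of Chatterjee's original argument, and it also matches the scheme the paper uses to prove its own refinement (Theorem \ref{NP12} together with Proposition \ref{mi}). Your $T(X)$ is exactly the finite-dimensional form of $\langle DF,-DL^{-1}F\rangle_H$ via \eqref{2repr}--\eqref{Mehler}; the reduction $d_{TV}\le\frac{2}{\sigma^2}E|T(X)-\sigma^2|\le\frac{2}{\sigma^2}\sqrt{\Var T(X)}$ is the finite-dimensional content of Theorem \ref{NP12}; and applying the Poincar\'e inequality \eqref{PM} to $T$ is the same opening move as in Proposition \ref{mi}. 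Where you part ways is precisely at the step the paper is designed to improve: your pointwise bound $\|\nabla^2 f(X)\cdot v\|\le\|\nabla^2 f(X)\|_{op}\|v\|$ inside the Mehler integral is the lossy move that produces the operator norm in \eqref{SPCh}, and Remark \ref{useofMehler} identifies exactly this as the source of suboptimal rates in applications. In Proposition \ref{mi} the paper instead retains the kernel structure of $\langle D^2F,-DL^{-1}G\rangle$, uses Jensen and Fubini together with the $L^q$-contractivity of $P_t$ to remove the time variable, and only then applies Cauchy--Schwarz at the level of the kernels $D^2F(x,y)$, yielding the directly computable contraction $D^2F\otimes_1 D^2F$ in place of $\|D^2F\|_{op}$. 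So your proof is correct for the stated result --- the $2\sqrt{5}$ indeed emerges from the Stein factor $2/\sigma^2$, the relative $e^{-2t}$ weight on the Hessian term, and careful Cauchy--Schwarz bookkeeping as you indicate --- but by construction it cannot recover the sharper, operator-norm-free bound that is the paper's contribution.
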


Soon after \cite{Ch:09}, the authors of \cite{NPR:09} pointed out that the finite-dimensional Stein-type inequalities leading to relation \eqref{SPCh} are special instances of more general estimates, which can be obtained by combining Stein's method and Malliavin calculus on an infinite-dimensional Gaussian space. In particular, in \cite{NPR:09} the following general version of \eqref{SPCh} is obtained, involving functionals of arbitrary infinite-dimensional Gaussian fields (precise definitions of the Sobolev space $\DD^{2,4}$, Malliavin derivatives $D^\alpha,\,\alpha=1,2$, and of insonormal Gaussian process will be given in Section \ref{GS}).

\bigskip

\begin{thm}[Second order Poincar\'e inequality -- \cite{NPR:09}] \label{SOPI-NPR-thm}
Let $X$ be an isonormal Gaussian process over some real separable Hilbert space $H$, and let $F=f(X) \in \DD^{2,4}$. Assume that $E\[F\]=\mu$ and $\Var F=\sigma^2$. Let $N\sim\NN\(\mu,\sigma^2\)$. Then,
\begin{equation}\label{SOPI-NPR}
d_{TV}\(F,N\)\leq\frac{\sqrt{10}}{\sigma^2}\(E\norm{DF}_{H}^4\)^{1/4}\(E\norm{D^2F}_{op}^4\)^{1/4}\,,
\end{equation} 
where $\norm{\cdot}_{op}$ stands for the operator norm of the random Hilbert-Schmidt operator $g \mapsto \l g, D^2 F\r_{H}$.\end{thm}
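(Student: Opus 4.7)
The plan is to iterate the Gaussian Poincar\'e inequality \eqref{PM} exactly once, on a carefully chosen auxiliary functional, inside the standard Malliavin-Stein framework. Let $L$ denote the Ornstein-Uhlenbeck generator on the Gaussian space of $X$ and $L^{-1}$ its pseudo-inverse on the orthogonal complement of the constants. The first step is the by now classical Stein-Malliavin bound
\begin{equation*}
d_{TV}(F,N) \leq \frac{2}{\sigma^2}\, E\bigl|\sigma^2 - \langle DF, -DL^{-1}F\rangle_H\bigr|,
\end{equation*}
obtained by combining the estimate $\norm{h}_\infty \leq 2$ for the Stein solution $h$ associated with the total variation distance with the integration-by-parts formula $E[(F-\mu)g(F)] = E[g'(F)\,\langle DF, -DL^{-1}F\rangle_H]$ for sufficiently smooth $g$. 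Setting $G := \langle DF, -DL^{-1}F\rangle_H$, the identity $E[G] = \Var F = \sigma^2$ converts the right-hand side into the mean absolute deviation of $G$ around its mean, which by Jensen is bounded by $\sqrt{\Var G}$. The problem therefore reduces to estimating $\Var G$.

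For the second step, I would apply \eqref{PM} to $G$ itself, giving $\Var G \leq E\norm{DG}_H^2$. Computing $DG$ through the product rule on the bilinear form $\langle\cdot,\cdot\rangle_H$, with $D^2 F$ and $D^2 L^{-1}F$ viewed as random Hilbert-Schmidt operators on $H$, yields the two-term expression
\begin{equation*}
DG = -\langle D^2 F, DL^{-1}F\rangle_H - \langle DF, D^2 L^{-1}F\rangle_H,
\end{equation*}
where each bracket denotes the natural contraction of a Hilbert-Schmidt operator against an element of $H$ and is itself an $H$-valued random element.

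The third step is where the Mehler representation enters. From $-DL^{-1}F = \int_0^\infty e^{-t} P_t DF\, dt$, and its differentiated analogue $-D^2 L^{-1}F = \int_0^\infty e^{-2t} P_t D^2 F\, dt$ coming from the commutation $DP_t = e^{-t}P_t D$, combined with the $L^p$-contractivity of the Ornstein-Uhlenbeck semigroup $P_t$ and Minkowski's inequality for integrals, I obtain dominations of $\bigl(E\norm{DL^{-1}F}_H^4\bigr)^{1/4}$ by $\bigl(E\norm{DF}_H^4\bigr)^{1/4}$, and of $\bigl(E\norm{D^2 L^{-1}F}_{op}^4\bigr)^{1/4}$ by a constant multiple of $\bigl(E\norm{D^2 F}_{op}^4\bigr)^{1/4}$. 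A triangle inequality on the two terms of $DG$, the operator-norm estimate $\norm{\langle T, h\rangle_H}_H \leq \norm{T}_{op}\,\norm{h}_H$, and a final Cauchy-Schwarz in $L^2(\Omega)$ on the product $\norm{DF}_H \cdot \norm{D^2 F}_{op}$ then produce
\begin{equation*}
\Var G \leq C\, \bigl(E\norm{DF}_H^4\bigr)^{1/2}\,\bigl(E\norm{D^2 F}_{op}^4\bigr)^{1/2}
\end{equation*}
for an explicit numerical constant $C$, which plugged back into the Stein bound yields the prefactor $\sqrt{10}/\sigma^2$ after careful tracking.

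The step I expect to be the main obstacle is the control of $\norm{D^2 L^{-1}F}_{op}$ in operator norm rather than in the softer Hilbert-Schmidt norm: a naive semigroup manipulation would only give the latter. The remedy is a conditional Jensen argument built on the Mehler identity $P_t D^2 F = E[D^2 F(e^{-t}X + \sqrt{1-e^{-2t}}Y) \mid X]$, with $Y$ an independent copy of $X$, which allows one to pull $\norm{\cdot}_{op}$ inside the conditional expectation before invoking contractivity. Once this technical point is resolved, the remainder of the argument is essentially careful bookkeeping to land exactly on $\sqrt{10}$.
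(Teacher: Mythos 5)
Your proof is correct and recovers the constant $\sqrt{10}$ exactly: the Stein--Malliavin bound contributes $2/\sigma^2$, the decomposition of $DG$ into two terms and the elementary inequality $(a+b)^2\leq 2a^2+2b^2$ contribute a factor $2$, the Mehler kernel $\int_0^\infty e^{-t}\,dt = 1$ controls the first term, and the kernel $\int_0^\infty e^{-2t}\,dt = \tfrac12$ controls the second, so that $\operatorname{Var}G \leq 2(1+\tfrac14)\,(E\|DF\|_H^4)^{1/2}(E\|D^2F\|_{op}^4)^{1/2}$ and $2\sqrt{5/2}=\sqrt{10}$. Your identification of the sticking point — passing from Hilbert--Schmidt control of $P_tD^2F$ to operator-norm control — and its resolution via conditional Jensen through the Mehler representation is precisely the correct repair.

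One structural remark is in order: the paper does \emph{not} prove this statement; it is quoted from Nourdin, Peccati and Reinert, and your argument reconstructs theirs. The paper's own contribution (Theorem 2.1 via Proposition 3.2) departs from your route at the exact point you apply the operator-norm estimate $\|\langle D^2F, h\rangle_H\|_H \leq \|D^2F\|_{op}\|h\|_H$ followed by Cauchy--Schwarz in $L^2(\Omega)$. Instead, the paper substitutes the Mehler representation \emph{inside} the inner products defining $A_1$ and $A_2$, expands the squared $H$-norm as a double integral over $A\times A$, and only then applies Cauchy--Schwarz pointwise in $(x,y)$ together with the contractivity of $P_t$; this produces a bound in terms of $\big(E[((D^2F\otimes_1 D^2F)(x,y))^2]\big)^{1/2}$ and $\big(E[(DF(x)DF(y))^2]\big)^{1/2}$ integrated against $d\mu\otimes d\mu$, which is directly computable, rather than a bound involving $E\|D^2F\|_{op}^4$, which in applications one can only estimate by moving further away from the distributional distance. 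This is exactly the distinction drawn in the paper's Remark 3.3: your proof is the classical one, and the paper's improvement consists in postponing Cauchy--Schwarz until after the Mehler formula has been exploited, thereby avoiding the operator norm altogether.
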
 

As already discussed, the initial impetus for the present paper comes from the fact that (as described e.g. in Remark 4.3 of \cite{NPR:09}), once these inequalities are applied, they often give suboptimal rate of convergence. Indeed, since in most applications of interest it is not possible to compute directly the expectation involving the operator norm in both bounds \eqref{SPCh} and \eqref{SOPI-NPR}, one is forced to move farther away from the distance in distribution and use bounds on the operator norm instead of computing it directly. 
Our strategy in order to overcome this difficulty is to adapt to the Gaussian setting an approach recently developed in \cite{LPS:16}, which relies on the use of the so-called Mehler formula (see \eqref{Mehler}), where the authors prove second order Poincar\'e inequalities for Gaussian approximation of Poisson functionals, yielding presumably optimal rates in several geometric applications. 

The next theorem contains one of the \emph{abstract} estimates developed in the present paper -- see Theorem \ref{SOPIi} below for a complete statement. 

\bigskip

\begin{thm}
Let $H:=L^2(A, \mathscr{A},\mu)$, where $(A, \BB(A))$ is a Polish space endowed with its Borel $\sigma$-field and $\mu$ is a positive, $\sigma$-finite and non-atomic measure and let $F=f(X)\in\mathbb{D}^{2,4}$ be s.t. $E[F]=0$, $E[F^2]=\sigma^2$, where $X$ is an isonormal Gaussian process over $H$.
\newpage
If $N\sim\NN(0,\sigma^2)$, then
\begin{eqnarray}
d_{TV}(F,N)&\leq&\frac{2\,\sqrt{3}}{\sigma^2}\(\int_{A\times A}\{E\[\(\(D^2F\otimes_1D^2F\)(x,y)\)^2\]\}^{1/2} \right.\times\notag\\
 &&\qquad\qquad\qquad \times \{E\[\(DF(x)DF(y)\)^2\]\}^{1/2}d\mu(x)d\mu(y)\bigg)^{1/2}.\label{first_bound}
 \end{eqnarray}
\end{thm}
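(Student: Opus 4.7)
The plan is to combine the Malliavin--Stein inequality for total variation with the Mehler representation of the Ornstein--Uhlenbeck semigroup, adapting to the Gaussian setting the Poisson-space strategy of \cite{LPS:16}.

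First, I would start from the standard Malliavin--Stein bound
$$d_{TV}(F,N) \leq \frac{1}{\sigma^2}\sqrt{\Var\bigl(\l DF, -DL^{-1}F\r_H\bigr)},$$
obtained from $d_{TV}(F,N)\leq\sigma^{-2}E[|\sigma^2-\l DF, -DL^{-1}F\r_H|]$ together with the identity $E[\l DF, -DL^{-1}F\r_H]=\sigma^2$ and Jensen's inequality. The task thus reduces to estimating the variance of $G:=\l DF, -DL^{-1}F\r_H$ by a universal constant times the integral on the right-hand side of~\eqref{first_bound}.

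Second, I would apply the Gaussian Poincar\'e inequality \eqref{PM} to $G$, combined with the chain rule
$$D_zG = \l D^2F(z,\cdot), -DL^{-1}F\r_H + \l DF, -D^2L^{-1}F(z,\cdot)\r_H.$$
Using $(a+b)^2\leq 2(a^2+b^2)$, expanding each inner product as a double integral over $A$, and applying Fubini, the first term produces the contraction $(D^2F\otimes_1 D^2F)(y,y')$ paired with $(-DL^{-1}F)(y)(-DL^{-1}F)(y')$, while the second term produces the contraction of $-D^2L^{-1}F$ with itself paired with $DF(y)DF(y')$. At this point I would insert the Mehler representations
$$-DL^{-1}F = \int_0^{\infty}e^{-t}P_tDF\,dt, \qquad -D^2L^{-1}F = \int_0^{\infty}e^{-2t}P_tD^2F\,dt,$$
and, using Jensen's inequality in the time variable (with the weights $e^{-t}$ and $e^{-2t}$) together with the $L^2$-contractivity of $P_t$, replace the factors involving $-DL^{-1}F$ and $-D^2L^{-1}F$ by the corresponding factors involving $DF$ and $D^2F$. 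Both summands are thereby reduced to expressions of the form
$$C\int_{A\times A}E\bigl[\,|(D^2F\otimes_1 D^2F)(y,y')|\cdot|DF(y)DF(y')|\,\bigr]d\mu(y)d\mu(y').$$

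Finally, applying the Cauchy--Schwarz inequality in probability pointwise in $(y,y')$, i.e.\ $E[|AB|]\leq\sqrt{E[A^2]E[B^2]}$, separates the two contributions and produces the integrand $\sqrt{E[((D^2F\otimes_1 D^2F)(y,y'))^2]}\cdot\sqrt{E[(DF(y)DF(y'))^2]}$. Taking the square root of the resulting variance estimate and dividing by $\sigma^2$ yields~\eqref{first_bound}, with the constant $2\sqrt{3}=\sqrt{12}$ arising from carefully tracking the factors of $2$ coming from $(a+b)^2\leq 2(a^2+b^2)$ and from the successive applications of Jensen. The main obstacle I anticipate is this last block: replacing $-DL^{-1}F$ and $-D^2L^{-1}F$ by $DF$ and $D^2F$ via Mehler while preserving the pointwise bilinear structure in $(y,y')$ that is needed in order to recover exactly the contraction $D^2F\otimes_1 D^2F$ and the product $DF(y)DF(y')$. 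It is precisely this manipulation---bypassing any appeal to the operator norm $\norm{D^2F}_{op}$---that is expected to yield rates of convergence sharper than those obtainable from Theorems~\ref{ch_thm} and~\ref{SOPI-NPR-thm}; the hypothesis $F\in\mathbb{D}^{2,4}$ ensures that every expectation appearing along the way is finite.
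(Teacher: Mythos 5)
Your overall plan follows the paper's route almost step by step: start from the Malliavin--Stein inequality of Theorem~\ref{NP12} together with $E[\l DF,-DL^{-1}F\r_H]=\sigma^2$ and Jensen; apply the Poincar\'e inequality \eqref{PM} and the product/chain rule to the scalar random variable $\l DF,-DL^{-1}F\r_H$; split via $(a+b)^2\le 2(a^2+b^2)$; insert the integral representation $-DL^{-1}F=\int_0^\infty e^{-t}P_tDF\,dt$ (and your companion formula $-D^2L^{-1}F=\int_0^\infty e^{-2t}P_tD^2F\,dt$, which the paper only invokes implicitly in its ``similarly'' treatment of the second term); and finally use Mehler, Jensen, Cauchy--Schwarz and contractivity. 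This is precisely the structure of Theorem~\ref{NP12} plus Proposition~\ref{mi} in the paper.

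There is, however, a genuine gap at exactly the point you single out as the ``main obstacle'', and it concerns the \emph{order} in which $P_t$ is removed. You claim that after Jensen in $t$ and $L^2$-contractivity one arrives at an intermediate bound of the form $C\int_{A\times A}E\bigl[|(D^2F\otimes_1D^2F)(y,y')|\,|DF(y)DF(y')|\bigr]\,d\mu(y)\,d\mu(y')$, and that Cauchy--Schwarz in $\omega$ is then applied to this. But $L^2$-contractivity controls the $L^2(\Omega)$ norm of $P_tB$ by that of $B$; it does not allow you to discard $P_t$ \emph{inside} an expectation against another random variable. Indeed $|E[A\,P_tB]|\le E[|AB|]$ is false in general (e.g.\ $A=\mathbbm{1}_{\{W\in[0,1]\}}$, $B=\mathbbm{1}_{\{W\in[1,2]\}}$ gives $E[|AB|]=0$ while $E[A\,P_tB]>0$ for $t>0$). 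What the paper actually does is: after the double Jensen (in $t$ and conditionally on $X$) and Fubini, the Mehler formula \eqref{Mehler} --- not merely the representation \eqref{2repr} --- is used to rewrite the conditional expectation $E[Dg(X_t)(x)Dg(X_t)(z)\mid X]$ as $P_t\bigl(DF(x)DF(z)\bigr)$, i.e.\ $P_t$ acting on the \emph{product}; one then applies Cauchy--Schwarz in $\omega$ to obtain $\{E[A^2]\}^{1/2}\{E[(P_tB)^2]\}^{1/2}$, and only at this stage does $L^2$-contractivity kill the $P_t$. Swapping the order of Cauchy--Schwarz and contractivity as you propose breaks the derivation. A second, more cosmetic issue is the constant: the total variation Malliavin--Stein bound carries $c_{TV}=2/\sigma^2$, not $1/\sigma^2$; once this is fixed, the sharper constant $2\sqrt{3}/\sigma^2$ (rather than $4/\sigma^2$ as in Theorem~\ref{SOPIi}) is obtained by exploiting the extra $\frac12=\int_0^\infty e^{-2t}dt$ available in the estimate of the second summand, not from the bookkeeping you describe.
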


\begin{oss}
The fact that $H$ is a $L^2$ space is fundamental for our proof. However, we will also see that our results are general enough, in order to imply explicit bounds for all the common situations of interest, including non-linear functionals of finite Gaussian vectors with arbitrary covariance matrices.
\end{oss}

Our main abstract results are successfully applied to deduce -- often sharp -- new quantitative central limit theorems (QCLTs)
for the following models:
\begin{itemize} 

\item[(i)] In Section \ref{sec_NLFIGP}, we obtain presumably sharp QCLTs for non-linear functionals of stationary Gaussian fields including:
\begin{itemize}
\item the increment of a Brownian motion,
\item the centred Ornstein-Uhlenbeck process,
\item the increments of a fractional Brownian motion;
\end{itemize}
hence obtaining Breuer-Major type results as well as improving the suboptimal rates of convergence obtained in \cite{NPR:09}.

\item[(ii)] In Section \ref{sheet}, we obtain a certainly optimal bound for non-linear positive functionals of a Brownian sheet on $\R^n$, exploding around singularities in the domain of integration, see also Remark \ref{exact}. This result is a generalization of limit theorems studied, with different techniques, in \cite{NP:05} and \cite{NP:09c}.

\item[(iii)] In Section \ref{Wigner}, we obtain a QCLT for the trace of a power $p_n$ of a $n\times n$ Gaussian Wigner matrix, with $p_n \to \infty$ as $n \to \infty$ in such a way that $p_n=o(n^{4/15})$, see Theorem \ref{WignerTHM}. This example is closely related to results in the famous paper \cite{SS:98} as well as to the QCLT proved in \cite{Ch:09} (see the discussion at the beginning of Section \ref{Wigner}). Our findings can be seen as an improvement of these results in terms of speed of $p_n$, see Remark \ref{comparison}.

\end{itemize}

\bigskip

\begin{oss}
We again stress that the reason why our second order Poincar\'e inequalities allow us to get sharp rates of convergence relies on the fact that all the quantities in \eqref{first_bound} are \emph{directly} computable. This is not the case for inequalities \eqref{SPCh} and \eqref{SOPI-NPR} where the authors, in order to apply their results, have to bound the operator norm using Cauchy-Schwarz inequality, moving farther away from the distance in distribution.
\end{oss}

To conclude this section, we mention that the present paper is one of the latest instalments in a growing body of work, connecting limit theorems (including those of the stable type) for functionals of Gaussian fields, and variational techniques based on Malliavin calculus - see \cite{NP:05}, \cite{PT:08}, \cite{NP:09}, \cite{NN:10}, \cite{NP:12}, \cite{NNP:16}. See also \url{https://sites.google.com/site/malliavinstein/home} for a complete list.

{\bf Plan of the paper.} 
Our paper is organised as follows: in the next section we explain our general setting, providing all the basic ingredients that we will use thorough the paper. In Section 2 we present our main results, while Section 3  contains the proofs. In Section 4 we prove QCLTs for some non-linear functionals of Gaussian fields, in particular non-linear functionals of stationary Gaussian fields (including some Breuer-Major type results, see \cite{BM:83}) and non-linear positive functionals of a Brownian sheet on $\R^n$.
Finally, in Section 5, we present a QCLT for the trace of a power $p_n$ of a $n\times n$ Gaussian Wigner matrix (some technical proofs are contained in the auxiliary file Appendix A that the reader can find at \url{https://annavidotto.files.wordpress.com/2018/06/auxiliary_file-appendix.pdf}).


\subsection{General setting}\label{GS}

\subparagraph{Probability distances}\label{dist}

We will consider several notions of {\it distances} between the distributions of two random vectors $X,Y$ with values in $\R^m$, $m\geq1$ (see \cite[Appendix C]{NP:12} and the references therein for a complete discussion):
\begin{itemize}
\item[1.] The \emph{Kolmogorov distance} 
\begin{flalign}
&d_{Kol}(X,Y)=\sup_{z_1,\dots, z_m \in \mathbb{R}}\big| P\(X\in(-\infty,z_1]\times\cdots\times(-\infty,z_m]\) \nonumber\\
                     &\qquad\qquad\qquad\qquad\qquad\qquad-P\(Y\in(-\infty,z_1]\times\cdots\times(-\infty,z_m]\)\big|. \label{dKol_DEF}
\end{flalign}
\item[2.] The \emph{total variation distance} 
\begin{equation}\label{dTV_DEF}
d_{TV}(X,Y)=\sup_{B\in\mathscr{B}(\mathbb{R}^m)}\abs{P\(X\in B\)-P\(Y\in B\)}.
\end{equation}
\item[3.] The \emph{Wasserstein distance}  
\begin{equation}\label{dW_DEF}
d_{W}(X,Y)=\sup_{h \in \mathscr{H}}\abs{E\[h(X)\] - E\[h(Y)\]}\,,
\end{equation}
where $\mathscr{H}$ is the class of all functions $h : \R^m \rightarrow \R$ such that $\norm{h}_{Lip}\leq1$, with 
\begin{equation}\label{LipNorm_DEF}
\norm{h}_{Lip}=\sup_{x,y\in \mathbb{R}^m,x\neq y}\frac{\abs{h(x)-h(y)}}{\norm{x-y}_{\mathbb{R}^m}}.
\end{equation}
\end{itemize}
It is immediate to note that $d_{Kol}(\cdot,\cdot)\leq d_{TV}(\cdot,\cdot)$. Moreover, if $X$ is any real-valued random variable and 
$N \sim \mathcal{N}(0,1)$, then $d_{Kol}(X,N)\leq 2\sqrt{d_{W}(X,N)}$ (see, among others, \cite[Theorem 3.3]{CGS:11} and more generally \cite[Theorem 3.1]{APP:16}).


\subparagraph{Gaussian analysis and Malliavin calculus}\label{Malliavin}

We will now present the basic elements of Gaussian analysis and Malliavin calculus that are used in this paper. The reader is referred to the two monographs \cite{N:06} and \cite{NP:12} for further informations.

Let $H=L^2(A, \BB(A), \mu)$, where $(A, \BB(A))$ is a Polish space endowed with its Borel $\sigma$-field and $\mu$ is a positive, $\sigma$-finite and non-atomic measure. An \textit{isonormal Gaussian process} $X=\{X(h): h \in H\}$ over $H$ is a centered Gaussian family defined on some probability space $(\Omega, \mathscr{F}, \mathbb{P})$ such that $E[X(h)X(g)]=\langle g,h \rangle_H$ for every $h,g \in H$. We will always assume $\mathscr{F}=\sigma(X)$ and write $L^2(\Omega)$ instead of $L^2(\Omega, \mathscr{F}, \mathbb{P})$.

Let $\mathcal{S}$ denote the set of all random variables of the form 
\begin{equation} \label{smooth}
f(X(\phi_1),\dots,X(\phi_m)),
\end{equation}
where $m\geq1$, $f:\mathbb{R}^m\rightarrow\mathbb{R}$ is a $C^{\infty}$-function such that $f$ and all its partial derivatives have at most polynomial growth at infinity, and $\phi_i \in H, i=1,\dots,m$. 
Note that the space $\mathcal{S}$ is dense in $L^q(\Omega)$ for every $q\geq1$.
Let $F \in \mathcal{S}$ be of the form \eqref{smooth}, the \textit{Malliavin derivative} of $F$ is the element of $L^2(\Omega; H)$ defined by
\begin{equation}\label{Mder}
DF=\sum_{i=1}^m \frac{\partial f}{\partial x_{i}}(X(\phi_1),\dots,X(\phi_m))\,\phi_{i};
\end{equation}
while the \textit{second Malliavin  derivative} of $F$ is the element of $L^2(\Omega; H^{\odot 2})$ given by
\begin{equation}\label{Mder2}
D^2F=\sum_{i,j=1}^m \frac{\partial^{2} f}{\partial x_{i}\partial x_{j}}(X(\phi_1),\dots,X(\phi_m))\,\phi_{i}\,\phi_{j}\,,
\end{equation}
where $H^{\odot 2}$ is the second symmetric tensor power of $H$, so that $H^{\odot 2}=L_s^2(A^2, \BB(A^2), \mu^2)$ is the subspace of 
$L^2(A^2, \BB(A^2), \mu^2)$ whose elements are a.e. symmetric. 

For $\alpha=1,2$, the operator $D^\alpha$ is closable ($D^1:=D$), so we can extend the domain of $D^\alpha$ to the space $\mathbb{D}^{\alpha,p}$, $p\geq 1$, which is defined as the closure of $\mathcal{S}$ with respect to the norm
$$
\|F\|_{\mathbb{D}^{\alpha,p}}=\(E[|F|^p]+E[\|DF\|_{H}^p+E[\|D^2F\|_{H^{\otimes 2}}^p]\mathbbm{1}_{\{\alpha=2\}}\)^{1/p}.
$$
Plainly, $\mathbb{D}^{2,p}\subset \mathbb{D}^{1,p}$. We call $\mathbb{D}^{\alpha,p}$ the \textit{domain of $D^\alpha$ in $L^p(\Omega)$}.
The space $\mathbb{D}^{\alpha,2}$ is a Hilbert space with respect to the inner product 
$$
\langle F,G \rangle_{\mathbb{D}^{\alpha,2}}=E\[FG\]+E\[\langle DF,DG\rangle_{H}\]+E\[\langle D^2F,D^2G\rangle_{H^{\otimes 2}}\]\mathbbm{1}_{\{\alpha=2\}}\,.
$$

Note that the Malliavin derivative satisfies the following {\it chain rule}. Let $\psi:\mathbb{R}\rightarrow\mathbb{R}$ be a continuously differentiable function with bounded partial derivatives, then if $F\in \mathbb{D}^{1,2}$, $\psi(F) \in \mathbb{D}^{1,2}$ and we have that
$D\psi(F) =\psi'(F)\,DF$.

For $n\in \mathbb{N}\cup\{0\}$, we call $H_n(x)=(-1)^ne^{\frac{x^2}{2}}\frac{d^n}{dx^n}(e^{-\frac{x^2}{2}})$
the \textit{$n$-th Hermite polynomial}. 
For each $n\geq0$ we define 
$$\mathcal{H}_n=\overline{span\{H_n(X(h)), h \in H,\|h\|_H=1\}}^{\|\cdot\|_{L^2(\Omega)}}.$$ 
The space $\mathcal{H}_n$ is called the $nth$ \textit{Wiener chaos} of $X$.
Clearly, we have $\mathcal{H}_0=\mathbb{R}$ and $\mathcal{H}_1=X$. Moreover, it is well known that $\mathcal{H}_n \bot \mathcal{H}_m$ for every 
$n\neq m$ and thus that the sum $\bigoplus_{n=0}^{\infty}\mathcal{H}_n$ is direct in $L^2(\Omega)$.
By the density of polynomial functions, this implies that every random variable $F \in L^2(\Omega)$ admits a unique expansion of the type $F=E[F]+\sum_{n=1}^{\infty}F_n$ where $F_n \in \mathcal{H}_n$ and the series converges in $L^2(\Omega)$.

The \textit{Ornstein-Uhlenbeck semigroup} $(P_t)_{t\geq0}$ is defined for all $t\geq0$ and $F \in L^2(\Omega)$ by $P_t(F)=\sum_{p=0}^{\infty}e^{-pt}J_p(F) \in L^2(\Omega)$, where $J_p(F)=\operatorname{Proj}(F|\mathcal{H}_p)$ stands for the orthogonal projection of $F$ onto the $p$-th Wiener chaos.
One can prove that for every $t >0$ and every $q\geq1$, $P_t$ is a contraction on $L^q(\Omega)$, that is $E\[\abs{P_t\(F\)}^q\]\leq\norm{F}_{L^q(\Omega)}^{q}$,  for all $F\in L^q(\Omega)$.
Let $F\in L^1(\Omega)$, let $X'$ be an independent copy of $X$, and assume that $X$ and $X'$ are defined on the product probability space 
$(\Omega \times \Omega',\F\otimes\F',\P \times \P')$. Since $F$ is measurable with respect to $X$, we can write $F = f(X)$ with $f : \R^H \to \R$ a measurable mapping determined $\P \circ X^{-1}$ a.s.. We have the so-called {\it Mehler formula}
\begin{equation} \label{Mehler}
P_tF=E\[f(e^{-t}X+\sqrt{1-e^{-2t}}X')\big|X\], \quad t\geq0\,.
\end{equation}

The generator $L$ of the Ornstein-Uhlenbeck semigroup is defined as $LF=-\sum_{p=1}^{\infty}pJ_p(F)$ with domain  given by 
$\operatorname{Dom}L=\{F \in L^2(\Omega):\sum_{p=1}^{\infty}p^2E\[J_p(F)^2\] < \infty\}$.
For any $F \in L^2(\Omega)$ we define $L^{-1}F=-\sum_{p=1}^{\infty}\frac{1}{p}J_p\(F\)$. The operator $L^{-1}$ is called the \textit{pseudo-inverse} of $L$. The name of $L^{-1}$ is justified by the fact that for any $F \in L^2(\Omega)$, $L^{-1}F \in \operatorname{Dom}L$ and $LL^{-1}F=F-E(F)$.
Let $F \in \mathbb{D}^{1,2}$ with $E[F]=0$, then the following relation holds
\begin{equation} \label{2repr}
-DL^{-1}F=\int_0^{\infty}e^{-t}P_tDFdt=-(L-I\,)^{-1}DF.
\end{equation}

For every $1\leq m\leq n$, every $r=1,\dots,m$, every $f \in L^2(A^n, \BB(A^n), \mu^n)$ and every $g \in L^2(A^m, \BB(A^m),\mu^m)$ we define the $r$-th contraction $f\otimes_rg: A^{n+m-2r}\to \R$ by
\begin{flalign}\label{contraction}
&f\otimes_rg(y_1,\dots,y_{n+m-2r})=\int_{A^r}f(x_1,\dots,x_r,y_1,\dots,y_{m-r}) \times\notag\\
                                                    &\qquad\qquad\qquad\times g(x_1,\dots,x_r,y_{m-r+1},\dots,y_{m+n-2r})d\mu(x_1)\cdots d\mu(x_r).
\end{flalign}

We stress that for each $F\in \mathbb{D}^{2,2}$ there exist two measurable processes $Y:\Omega\times A\to \R$ and $Z:\Omega\times A\times A\to \R$ such that for almost each $(\omega,a,b)\in \Omega\times A\times A$, $DF(\omega,a)=Y(\omega,a)$ and $D^2F(\omega,a,b)=Z(\omega,a,b)$ (for a detailed discussion see \cite[Section 1.2.1]{N:06}); for the rest of the paper we will always identify $DF$ and $D^2F$ with $Y$ and $Z$, respectively.


\section{Main results}

\subsection{Main estimates}

Let the notation of Section \ref{GS} prevail. Our main abstract result is the following. 

\bigskip

\begin{thm}\label{SOPIi}
Let $F\in\mathbb{D}^{2,4}$ be such that $E[F]=0$ and $E[F^2]=\sigma^2$, and let $N\sim\NN(0,\sigma^2)$; then
\begin{eqnarray}
d_{M}(F,N)&\leq& c_M\,\(\int_{A\times A}\{E\[\(\(D^2F\otimes_1D^2F\)(x,y)\)^2\]\}^{1/2} \right.\times\notag\\
 &&\qquad\qquad \times \{E\[\(DF(x)DF(y)\)^2\]\}^{1/2}d\mu(x)d\mu(y)\bigg)^{1/2},\label{SOPTV}
\end{eqnarray}
where $M \in \{TV,Kol,W\}$ and $c_{TV}=\frac{4}{\sigma^2}$, $c_{Kol}=\frac{2}{\sigma^2}$, $c_{W}=\sqrt{\frac{8}{\sigma^2\pi}}$.
\end{thm}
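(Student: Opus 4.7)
The plan is to reduce the distance bound to a Poincar\'e-type estimate on the Stein--Malliavin pivot $G := \langle DF, -DL^{-1}F\rangle_H$ and then to bound $\Var(G)$ using the Gaussian Poincar\'e inequality \eqref{PM}. The crux, compared to the proof of Theorem~\ref{SOPI-NPR-thm}, is to use the Mehler formula \eqref{Mehler} in order to keep all the quantities involved ``in distribution'', rather than invoking an operator--norm bound.

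First I would invoke the standard Malliavin--Stein bound (see e.g.\ \cite[Theorem 5.1.3]{NP:12}), which gives
\begin{equation*}
d_M(F,N) \leq \alpha_M\, E\big|\sigma^2 - G\big| \leq \alpha_M \sqrt{\Var(G)},
\end{equation*}
with $\alpha_{TV}=2/\sigma^2$, $\alpha_{Kol}=1/\sigma^2$, $\alpha_W = \sqrt{2/(\pi\sigma^2)}$, after using Cauchy--Schwarz together with the identity $E[G]=\sigma^2$. It therefore suffices to establish $\Var(G) \leq 4\, Q$, where $Q$ denotes the double integral appearing on the right-hand side of \eqref{SOPTV}.

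Next I would apply the Gaussian Poincar\'e inequality \eqref{PM} to $G$, giving $\Var(G)\leq E[\|DG\|_H^2]$. Using the integral representation \eqref{2repr} together with the standard commutation identity $DP_s = e^{-s} P_s D$, the product rule for the Malliavin derivative yields
\begin{equation*}
DG(y) = \int_0^\infty e^{-s}\, a_s(y)\,ds + \int_0^\infty e^{-2s}\, b_s(y)\,ds,
\end{equation*}
where $a_s(y) := \int_A D^2F(y,x) P_s DF(x)\, d\mu(x)$ and $b_s(y) := \int_A DF(x) P_s D^2F(y,x)\, d\mu(x)$. The elementary inequality $(u+v)^2\leq 2u^2+2v^2$ together with Cauchy--Schwarz on the weighted time integrals (using $\int_0^\infty e^{-s}ds = 1$ and $\int_0^\infty e^{-2s}ds = 1/2$) then gives
\begin{equation*}
E[\|DG\|_H^2] \leq 2 \int_0^\infty e^{-s} E[\|a_s\|_H^2]\, ds + \int_0^\infty e^{-2s} E[\|b_s\|_H^2]\, ds.
\end{equation*}

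The main obstacle is to show that $E[\|a_s\|_H^2] \leq Q$ and $E[\|b_s\|_H^2]\leq Q$ uniformly in $s\geq 0$. Directly bounding the $P_s$ factors is delicate: a naive Cauchy--Schwarz followed by $L^q$-contractivity of $P_s$ would fail to reproduce the joint structure of $(DF(x_1)DF(x_2))$ on the right-hand side of \eqref{SOPTV}. The Mehler formula resolves the issue: writing $P_s DF(x) = E[DF(X^{(s)})(x)\mid X]$ with $X^{(s)}:= e^{-s}X+\sqrt{1-e^{-2s}}X'$, one may rewrite
\begin{equation*}
a_s(y) = E\Big[\int_A D^2F(y,x)\, DF(X^{(s)})(x)\, d\mu(x)\,\Big|\,X\Big],
\end{equation*}
apply conditional Jensen, integrate in $y$, exchange orders of integration, and recognise the contraction $D^2F\otimes_1 D^2F$ to obtain
\begin{equation*}
E[\|a_s\|_H^2] \leq \int_{A\times A} E\big[(D^2F\otimes_1 D^2F)(x_1,x_2)\, DF(X^{(s)})(x_1)\, DF(X^{(s)})(x_2)\big]\, d\mu(x_1)d\mu(x_2).
\end{equation*}
A final Cauchy--Schwarz in $L^2(\Omega)$ under the double integral, combined with the distributional identity $X^{(s)}\eqd X$ (which forces $E[(DF(X^{(s)})(x_1) DF(X^{(s)})(x_2))^2] = E[(DF(x_1)DF(x_2))^2]$), yields $E[\|a_s\|_H^2]\leq Q$; a symmetric computation based on $P_s D^2F(x,y) = E[D^2F(X^{(s)})(x,y)\mid X]$ gives $E[\|b_s\|_H^2]\leq Q$. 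Substituting back produces $\Var(G)\leq (2\cdot 1 + 1/2)\,Q = 5Q/2 \leq 4Q$, which closes the estimate.
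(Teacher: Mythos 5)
Your proof is correct and follows essentially the same route as the paper: Malliavin--Stein bound reducing to $E|\sigma^2-G|$, Gaussian Poincar\'e applied to $G=\langle DF,-DL^{-1}F\rangle_H$, the decomposition of $DG$ via $DP_s=e^{-s}P_sD$ (which is exactly the paper's split into $\langle D^2F,-DL^{-1}F\rangle+\langle DF,-D^2L^{-1}F\rangle$), and Mehler's formula to avoid operator norms. The only cosmetic differences are that you use Cauchy--Schwarz over the product space $\Omega\times\Omega'$ together with $X^{(s)}\eqd X$ where the paper conditions on $X$ and invokes $L^2$-contractivity of $P_s$ (these are equivalent), and you track the $e^{-2s}$ weight explicitly to get $\Var(G)\leq \tfrac{5}{2}Q$, a harmless sharpening of the paper's $4Q$.
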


\subsection{Corollaries and extensions}

Theorem \ref{SOPIi} contains, as a special case, probabilistic approximations involving random variables of the form $F=f\(X_1,\dots,X_d\)$, where $\(X_1,\dots,X_d\)^T$ is a standard Gaussian vector and $f:\R^d\to\R$ is a $C^2$ function such that its partial derivatives have sub-exponential growth. Indeed, if $A_1,\dots,A_d\in \BB(A)$ are such that $A_i\cap A_j=\emptyset$ for each $i,j$ such that $i\neq j$ and $\mu(A_i)=1$ for all $i$, then we have that  
\begin{eqnarray*}
F&\stackrel{law}{=}&f\(X\(\mathbbm{1}_{A_1}\),X\(\mathbbm{1}_{A_2}\),\dots,X\(\mathbbm{1}_{A_d}\)\)\,.
\end{eqnarray*}
Moreover, in view of \eqref{Mder} and \eqref{Mder2}, we have that
$$
DF(x)=\sum_{i=1}^d\nabla_{i}f(X)\mathbbm{1}_{A_i}(x) \quad\text{and}\quad
D^2F(x,y)=\sum_{i,j=1}^d\nabla^2_{ij}f(X)\mathbbm{1}_{A_i}(x)\mathbbm{1}_{A_j}(y)\,,
$$ 
where $\nabla_{i}f(X)$ is the $i$-th component of the gradient of $f$ and $\nabla^2_{ij}$ is the $ij$-th entry of the Hessian matrix of $f$.
This implies that
\begin{flalign*}
D^2F\otimes_1D^2F(x,y)&=\int_A\,d\mu(w)\,\sum_{i,j=1}^d\nabla^2_{ij}f(X)\mathbbm{1}_{A_i}(x)\mathbbm{1}_{A_j}(w)\,\sum_{k,l=1}^d\nabla^2_{kl}f(X)\mathbbm{1}_{A_k}(y)\mathbbm{1}_{A_l}(w)\\
&=\sum_{i,k=1}^d\(\sum_{l=1}^d\nabla^2_{il}f(X)\nabla^2_{kl}f(X)\)\mathbbm{1}_{A_i}(x)\mathbbm{1}_{A_k}(y)\,.
\end{flalign*}
In this case, the quantities on the right hand side of inequality \eqref{SOPTV} become, respectively,
\begin{flalign*}
&\{E\[\(\(D^2F\otimes_1D^2F\)(x,y)\)^2\]\}^{1/2}=\\
&=\{E\[\(\sum_{i,k=1}^d\(\sum_{l=1}^d\nabla^2_{il}f(X)\nabla^2_{kl}f(X)\)\mathbbm{1}_{A_i}(x)\mathbbm{1}_{A_k}(y)\)^2\]\}^{1/2}\\
&=\{\sum_{i,k=1}^dE\[\(\sum_{l=1}^d\nabla^2_{il}f(X)\nabla^2_{kl}f(X)\)^2\]\mathbbm{1}_{A_i}(x)\mathbbm{1}_{A_k}(y)\}^{1/2}\\
&=\sum_{i,k=1}^d\{E\[\(\sum_{l=1}^d\nabla^2_{il}f(X)\nabla^2_{kl}f(X)\)^2\]\}^{1/2}\mathbbm{1}_{A_i}(x)\mathbbm{1}_{A_k}(y)\\
\end{flalign*}
and, with similar steps,
\begin{flalign*}
&\{E\[\(DF(x)DF(y)\)^2\]\}^{1/2}=
\sum_{i,k=1}^d\{E\[\(\nabla_{i}f(X)\nabla_{k}f(X)\)^2\]\}^{1/2}\mathbbm{1}_{A_i}(x)\mathbbm{1}_{A_k}(y)\,.
\end{flalign*}
Hence, when $F=f\(X_1,\dots,X_d\)$, with $\(X_1,\dots,X_d\)$ a standard Gaussian vector, our main result takes the following form.

\bigskip

\begin{thm}\label{SOPIf}
Let $X=(X_1,\dots,X_d)\sim \mathcal{N}(0,I_{d\times d})$ and  $F:=f(X)$ for some $f \in C^2(\mathbb{R}^d)$ such that $E[F]=0$ and $E[F^2]=\sigma^2$. 
Let $N\sim\NN(0,\sigma^2)$, then
\begin{equation}\label{SOPfTV}
d_{M}(F,N)\leq c_M\,\sqrt{\sum_{i,l=1}^d \{E\[\(\sum_{j=1}^d\nabla^2_{ij}F\nabla^2_{lj}F\)^2\]\}^{1/2}\{E\[\(\nabla_{i}F\nabla_{l}F\)^2\]\}^{1/2}},
\end{equation}
where $M \in \{TV,Kol,W\}$, $c_{TV}=\frac{4}{\sigma^2},\, c_{Kol}=\frac{2}{\sigma^2},\,c_{W}=\sqrt{\frac{8}{\sigma^2\pi}}$ and $\nabla^2_{ij}F$ is the $ij$-th entry of the Hessian matrix of $F=f(X)$ while $\nabla_{i}F$ is the $i$-th element of the gradient of $F$.
\end{thm}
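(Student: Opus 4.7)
The plan is to recognise Theorem \ref{SOPIf} as a direct specialisation of Theorem \ref{SOPIi}, obtained by realising the standard Gaussian vector $(X_1,\dots,X_d)$ as evaluations of an isonormal process on indicator functions of disjoint sets. Concretely, I would pick a Polish space $(A,\mathscr{B}(A))$ with a $\sigma$-finite, non-atomic measure $\mu$ (e.g.\ $A=\R$ with Lebesgue measure) and fix pairwise disjoint measurable sets $A_1,\dots,A_d$ with $\mu(A_i)=1$. Then $X_i:=X(\mathbbm{1}_{A_i})$, $i=1,\dots,d$, are i.i.d.\ standard normals, so $F$ has the same law as $f(X(\mathbbm{1}_{A_1}),\dots,X(\mathbbm{1}_{A_d}))$; the $C^2$ assumption together with the implicit integrability making $F\in\mathbb{D}^{2,4}$ lets me invoke Theorem \ref{SOPIi}. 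The entire proof is then a bookkeeping exercise on the right-hand side of \eqref{SOPTV}, and in fact essentially all of it has already been carried out in the discussion preceding the statement: formulas \eqref{Mder}--\eqref{Mder2} give $DF(x)=\sum_i\nabla_i f(X)\mathbbm{1}_{A_i}(x)$ and $D^2F(x,y)=\sum_{i,j}\nabla^2_{ij}f(X)\mathbbm{1}_{A_i}(x)\mathbbm{1}_{A_j}(y)$, and the contraction $D^2F\otimes_1 D^2F(x,y)$ collapses, via $\int\mathbbm{1}_{A_j}\mathbbm{1}_{A_l}\,d\mu=\delta_{jl}$, to $\sum_{i,l}\bigl(\sum_j\nabla^2_{ij}f\,\nabla^2_{lj}f\bigr)\mathbbm{1}_{A_i}(x)\mathbbm{1}_{A_l}(y)$.

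The key technical observation that makes everything diagonalise is that products such as $\mathbbm{1}_{A_i}(x)\mathbbm{1}_{A_l}(y)\mathbbm{1}_{A_{i'}}(x)\mathbbm{1}_{A_{l'}}(y)$ vanish outside $i=i'$, $l=l'$, because the $A_i$ are disjoint. This justifies pulling both the expectation and the subsequent square root past the double sums, yielding
$$\{E[((D^2F\otimes_1 D^2F)(x,y))^2]\}^{1/2}=\sum_{i,l=1}^d\Big\{E\Big[\Big(\sum_{j=1}^d\nabla^2_{ij}F\,\nabla^2_{lj}F\Big)^2\Big]\Big\}^{1/2}\mathbbm{1}_{A_i}(x)\mathbbm{1}_{A_l}(y),$$
and the analogous expression for $\{E[(DF(x)DF(y))^2]\}^{1/2}$; these are precisely the two identities already written out in the text before the statement. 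Multiplying the two expressions, using disjointness once more to kill the cross terms in the resulting quadruple sum, and integrating against $d\mu(x)\,d\mu(y)$ (which contributes the factor $\mu(A_i)\mu(A_l)=1$) reduces the inner integral of \eqref{SOPTV} to $\sum_{i,l=1}^d\{E[(\sum_j\nabla^2_{ij}F\,\nabla^2_{lj}F)^2]\}^{1/2}\{E[(\nabla_iF\,\nabla_lF)^2]\}^{1/2}$. Substituting this back into \eqref{SOPTV} yields \eqref{SOPfTV} with the claimed constants $c_{TV},c_{Kol},c_W$.

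I do not expect any genuine obstacle: the proof is entirely bookkeeping built on two elementary facts, namely the disjointness identity $\mathbbm{1}_{A_i}\mathbbm{1}_{A_{i'}}=\delta_{ii'}\mathbbm{1}_{A_i}$ and the normalisation $\mu(A_i)=1$. The only point worth checking carefully is membership of $F$ in $\mathbb{D}^{2,4}$: for a generic $f\in C^2$ this is not automatic from $E[F^2]=\sigma^2$ alone, but is a standing assumption needed for Theorem \ref{SOPIi} to apply; in concrete situations (for instance when the partial derivatives of $f$ have sub-exponential growth, as already noted in the paragraph introducing this corollary) it follows by the usual density argument, the smooth random variables $\mathcal{S}$ being dense in $\mathbb{D}^{2,4}$.
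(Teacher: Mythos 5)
Your proposal is correct and follows exactly the route the paper takes: realise the standard Gaussian vector as $X(\mathbbm{1}_{A_1}),\dots,X(\mathbbm{1}_{A_d})$ with pairwise disjoint $A_i$ of unit measure, compute $DF$, $D^2F$, and the contraction $D^2F\otimes_1 D^2F$ explicitly, use disjointness to diagonalise the squares and pull roots past the sums, and substitute into Theorem \ref{SOPIi}. Indeed the paper presents precisely these computations in the paragraph immediately preceding the statement and treats the theorem as a direct consequence; your remark about the need for $F\in\mathbb{D}^{2,4}$ (ensured by the sub-exponential-growth hypothesis on the derivatives of $f$) is a sensible flag that the paper leaves implicit.
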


\bigskip

\begin{oss} \quad
Note that Theorem \ref{SOPIf} also applies to the case of a vector $X$ with a general covariance, that is $X \sim \mathcal{N}(0,B^2)$, where $B^2$ is a symmetric and positive definite $d\times d$ matrix. Indeed, one has that $F=f(X)=g(Z)$, where $g=f\circ B$ and $Z=(Z_1,\dots,Z_d)\sim \mathcal{N}(0,I_{d\times d})$. Therefore we have that
\begin{flalign*}
&d_{TV}(F,N)\leq\frac{4}{\sigma^2}\sqrt{\sum_{i,l=1}^d \{E\[\(\sum_{j=1}^d\nabla^2_{ij}g(Z)\nabla^2_{lj}g(Z)\)^2\]\}^{1/2}\{E\[\(\nabla_{i}g(Z)\nabla_{l}g(Z)\)^2\]\}^{1/2}}\\
&=\frac{4}{\sigma^2}\{\sum_{i,l=1}^d \{E\[\(\sum_{j=1}^d\sum_{k,m,r,s=1}^db_{mi}b_{kj}b_{rl}b_{sj}\nabla^2_{km}F\,\nabla^2_{rs}F\)^2\]\}^{1/2}\right.\times\\
&\left.\qquad\qquad\qquad\qquad\qquad\qquad\qquad\qquad\times\{E\[\(\sum_{k,m=1}^db_{ki}b_{ml}\nabla_{k}F\,\nabla_{m}F\)^2\]\}^{1/2}\}^{1/2}\,,
\end{flalign*}
with $b_{ij}$ the $ij$-th entry of the matrix $B$.
\end{oss}

Using the multidimensional version of \cite[Theorem 5.1.3]{NP:12} (which is one of the main ingredient of our main result's proof, see Section \ref{proof}), that is \cite[Theorem 6.1.1]{NP:12}, Theorem \ref{SOPIi} can be easily extended to a multidimensional setting as follows:

\bigskip

\begin{thm}\label{SOPImulti}
Let $F=(F_1,\dots,F_d)$, where, for each $i=1,\dots,d$, $F_i\in\mathbb{D}^{2,4}$ is such that $E[F_i]=0$ and $E[F_iF_j]=c_{ij}$, with $C=\{c_{ij}\}_{i,j=1,\dots,d}$ a symmetric and positive definite matrix. Let $N\sim\mathcal{N}(0,C)$, then we have that
\begin{flalign*}
&d_W(F,N)\leq2\sqrt{d}\norm{C^{-1}}_{op}\norm{C}_{op}\\
&\sqrt{\sum_{i,j=1}^d\int_{A\times A}\{E\[\(\(D^2F_i\otimes_1D^2F_i\)(x,y)\)^2\]\}^{1/2}\{E\[\(DF_j(x)DF_j(y)\)^2\]\}^{1/2}d\mu(x)d\mu(y)}.
\end{flalign*}
\end{thm}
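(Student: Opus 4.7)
The plan is to reduce Theorem~\ref{SOPImulti} to the same Poincar\'e-plus-Mehler argument underlying the proof of Theorem~\ref{SOPIi}, applied component by component, combined with the multidimensional Malliavin--Stein bound of \cite[Theorem~6.1.1]{NP:12} as the starting point. That bound asserts an inequality roughly of the shape
\begin{equation*}
d_W(F,N)\leq\norm{C^{-1}}_{op}\norm{C}_{op}\sqrt{d}\;E\!\left[\sqrt{\sum_{i,j=1}^d\(c_{ij}-\l DF_i,-DL^{-1}F_j\r_H\)^2}\right]\,,
\end{equation*}
and one application of Jensen's inequality brings the outer expectation inside the square root, reducing matters to estimating the sum $\sum_{i,j=1}^dE\[\(c_{ij}-\l DF_i,-DL^{-1}F_j\r_H\)^2\]$.

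Next, the Malliavin integration-by-parts identity yields $E\[\l DF_i,-DL^{-1}F_j\r_H\]=E[F_iF_j]=c_{ij}$, so each summand above coincides with the variance $\Var\(\l DF_i,-DL^{-1}F_j\r_H\)$. Applying the Gaussian Poincar\'e inequality \eqref{PM} to the Malliavin-differentiable random variable $\l DF_i,-DL^{-1}F_j\r_H$ then gives
\begin{equation*}
\Var\(\l DF_i,-DL^{-1}F_j\r_H\)\leq E\[\norm{D\l DF_i,-DL^{-1}F_j\r_H}_H^2\]\,.
\end{equation*}

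The key step is then to evaluate $D\l DF_i,-DL^{-1}F_j\r_H(z)$ using the Mehler representation \eqref{Mehler}: writing $-DL^{-1}F_j=\int_0^{\infty}e^{-t}P_tDF_j\,dt$ and exploiting the commutation $DP_t=e^{-t}P_tD$, the product rule produces two terms, one pairing $D^2F_i$ with $DF_j$ and another pairing $DF_i$ with $D^2F_j$. This is precisely the manipulation carried out in the proof of Theorem~\ref{SOPIi}, so the identical sequence of Cauchy--Schwarz in the integral variables, the $L^q(\Omega)$-contractivity of $P_t$, and a Fubini exchange produces, for each pair $(i,j)$, an upper bound of the form
\begin{equation*}
c\int_{A\times A}\{E\[\(\(D^2F_i\otimes_1D^2F_i\)(x,y)\)^2\]\}^{1/2}\{E\[\(DF_j(x)DF_j(y)\)^2\]\}^{1/2}d\mu(x)d\mu(y)\,,
\end{equation*}
where the two product-rule contributions combine symmetrically after relabeling $(i,j)\leftrightarrow(j,i)$ in the double sum. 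Substituting these individual estimates into the Stein-type inequality of the first step and collecting constants then yields the claim.

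The main obstacle---essentially bookkeeping once the one-dimensional argument has been set up---is tracking the absolute constants so that the final prefactor reduces to exactly $2\sqrt{d}\,\norm{C^{-1}}_{op}\norm{C}_{op}$, and verifying that symmetrization over $(i,j)$ really does leave only the single cross-pairing displayed in the statement rather than a more cumbersome mixture of $D^2F_i$--$DF_j$ and $DF_i$--$D^2F_j$ terms. Everything else is a word-for-word adaptation of the one-dimensional proof.
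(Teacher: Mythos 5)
Your proposal is correct and matches the approach the paper indicates: the multidimensional Malliavin--Stein bound of \cite[Theorem~6.1.1]{NP:12} as the starting point, followed by a component-wise application of the one-dimensional argument. One small remark for clarity: rather than re-deriving the Mehler/Cauchy--Schwarz estimate from scratch for each pair $(i,j)$, you can simply invoke Proposition~\ref{mi} with $F=F_i$, $G=F_j$, which gives
\begin{equation*}
E\!\left[\bigl(c_{ij}-\l DF_i,-DL^{-1}F_j\r_H\bigr)^2\right]\leq 2\,a_{ij}+2\,a_{ji},\qquad
a_{ij}:=\int_{A\times A}\{E[(D^2F_i\otimes_1D^2F_i)^2]\}^{1/2}\{E[(DF_j\,DF_j)^2]\}^{1/2}\,d\mu\,d\mu,
\end{equation*}
and then $\sum_{i,j}(2a_{ij}+2a_{ji})=4\sum_{i,j}a_{ij}$ after relabeling; the $\sqrt{4}=2$ combines with the $\sqrt{d}\,\norm{C^{-1}}_{op}\norm{C}_{op}$ from the Stein bound to give exactly the stated prefactor, which resolves both of the concerns you flag at the end.
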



\section{Proof of Theorem \ref{SOPIi}}\label{proof}

An important ingredient in order to prove our main result is a theorem given in \cite[Theorem 5.1.3]{NP:12} and, with slight more generality, in \cite[Theorem 5.2]{Nou:13}.

\bigskip

\begin{thm}[\cite{NP:12}]\label{NP12}
Let $F\in \DD^{1,2}$ with $E\[F\]=0$ and $E\[F^2\]=\sigma^2$, and let $N\sim\NN(0,\sigma^2)$. Then,
$$
d_{M}\(F,N\)\leq c_M\,E\[\abs{\sigma^2-\langle DF,-DL^{-1}F\rangle_{H}}\],
$$
where $M \in \{TV,Kol,W\}$ and $c_{TV}=\frac{2}{\sigma^2},\, c_{Kol}=\frac{1}{\sigma^2},\,c_{W}=\sqrt{\frac{2}{\sigma^2\pi}}$.

\end{thm}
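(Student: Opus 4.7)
The plan is to chain Theorem \ref{NP12} with a variance estimate on $\langle DF,-DL^{-1}F\rangle_H$ obtained via the Mehler formula and a Gaussian Poincar\'e inequality applied on an enlarged probability space. Since the Malliavin integration by parts yields $E[\langle DF,-DL^{-1}F\rangle_H]=\sigma^2$, Cauchy-Schwarz gives $E|\sigma^2-\langle DF,-DL^{-1}F\rangle_H|\leq\sqrt{\Var(\langle DF,-DL^{-1}F\rangle_H)}$. Combined with Theorem \ref{NP12}, this reduces the problem to showing $\Var(\langle DF,-DL^{-1}F\rangle_H)\leq 4B$, where $B$ denotes the double integral appearing under the outer square root of \eqref{SOPTV}: the factor of $2$ gained by the square root then exactly matches the doubled constants $c_{TV}=4/\sigma^2$, $c_{Kol}=2/\sigma^2$, $c_W=\sqrt{8/(\sigma^2\pi)}$.

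Introducing an independent copy $X'$ of $X$ on $(\Omega',\F',\P')$, setting $Y^t:=e^{-t}X+\sqrt{1-e^{-2t}}X'$, and combining \eqref{2repr} with the Mehler formula \eqref{Mehler} gives
$$\langle DF,-DL^{-1}F\rangle_H=\int_0^\infty e^{-t}\,E'[\Phi_t\mid X]\,dt,\qquad \Phi_t:=\int_A DF(a)\,(Df)(Y^t)(a)\,d\mu(a),$$
where $(Df)(Y^t)$ denotes the function obtained by substituting $Y^t$ for $X$ in the expression for $DF$ (well-defined for smooth cylindrical $F\in\mathcal{S}$ and extended by density to $\DD^{2,4}$). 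Taking the full expectation shows $\sigma^2=\int_0^\infty e^{-t}E[\Phi_t]\,dt$, so Cauchy-Schwarz against the probability measure $e^{-t}dt$ on $(0,\infty)$ followed by conditional Jensen produces
$$\Var(\langle DF,-DL^{-1}F\rangle_H)\;\leq\;\int_0^\infty e^{-t}\Var(\Phi_t)\,dt.$$

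The heart of the argument is the $t$-uniform bound $\Var(\Phi_t)\leq 4B$. Viewing $\Phi_t$ as a functional of the joint isonormal Gaussian process $(X,X')$ over $H\oplus H$ and applying \eqref{PM} gives $\Var(\Phi_t)\leq E\|D^X\Phi_t\|_H^2+E\|D^{X'}\Phi_t\|_H^2$. A chain-rule computation, exploiting that $Y^t$ depends linearly on $X$ and $X'$ with coefficients $e^{-t}$ and $\sqrt{1-e^{-2t}}$, yields $D_x^X\Phi_t=U_t(x)+e^{-t}V_t(x)$ and $D_x^{X'}\Phi_t=\sqrt{1-e^{-2t}}\,V_t(x)$, where
$$U_t(x):=\int_A D^2F(x,a)(Df)(Y^t)(a)\,d\mu(a),\quad V_t(x):=\int_A DF(a)(D^2f)(Y^t)(x,a)\,d\mu(a).$$
The algebraic identity $(U+e^{-t}V)^2+(1-e^{-2t})V^2=U^2+2e^{-t}UV+V^2$ together with $2e^{-t}|UV|\leq U^2+V^2$ then gives the clean $t$-independent bound $\|D^X\Phi_t\|_H^2+\|D^{X'}\Phi_t\|_H^2\leq 2\|U_t\|_H^2+2\|V_t\|_H^2$.

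It remains to check that $E\|U_t\|_H^2,\,E\|V_t\|_H^2\leq B$. Expanding each $\|\cdot\|_H^2$ as a triple integral and integrating the inner variable first produces the contraction $D^2F\otimes_1D^2F$ in the $U_t$-term and $(D^2f)(Y^t)\otimes_1(D^2f)(Y^t)$ in the $V_t$-term; Fubini followed by Cauchy-Schwarz in $L^2(\Omega)$ then expresses each $E\|\cdot\|_H^2$ as a double integral with integrand of the same form as in $B$. The crucial observation that erases the $t$-dependence is the distributional identity $Y^t\stackrel{d}{=}X$, which, because $Y^t$ has the same law as $X$, yields the joint identities $(Df)(Y^t)(y_1)(Df)(Y^t)(y_2)\stackrel{d}{=}DF(y_1)DF(y_2)$ and $((D^2f)(Y^t)\otimes_1(D^2f)(Y^t))(y_1,y_2)\stackrel{d}{=}(D^2F\otimes_1 D^2F)(y_1,y_2)$. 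Consequently $\Var(\Phi_t)\leq 4B$ uniformly in $t$, integration against $e^{-t}dt$ preserves this, and Theorem \ref{NP12} delivers the claimed constants. The main technical obstacle is the precise Malliavin chain rule for $\Phi_t$ on the product space, together with a careful density argument from $\mathcal{S}$ to $\DD^{2,4}$ to justify the manipulations with $(Df)(Y^t)$ and $(D^2f)(Y^t)$ and the law-invariance step.
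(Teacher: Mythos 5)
Your proposal does not prove the stated result. The statement you were asked to prove is Theorem~\ref{NP12}, the Stein--Malliavin bound
$d_M(F,N)\leq c_M\,E\big[|\sigma^2-\langle DF,-DL^{-1}F\rangle_H|\big]$, which is \emph{cited} from \cite[Theorem~5.1.3]{NP:12} and not proved in this paper. Its proof is a Stein's-method argument: solve the Stein equation $\sigma^2\varphi'(x)-x\varphi(x)=h(x)-E[h(N)]$, record the bound $\|\varphi'\|_\infty\leq c_M$ depending on the class of test functions $h$ corresponding to $d_M$, and apply the Gaussian integration-by-parts identity $E[F\varphi(F)]=E[\varphi'(F)\langle DF,-DL^{-1}F\rangle_H]$ to obtain $|E[h(F)]-E[h(N)]|=|E[\varphi'(F)(\sigma^2-\langle DF,-DL^{-1}F\rangle_H)]|\leq c_M\,E|\sigma^2-\langle DF,-DL^{-1}F\rangle_H|$. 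None of this appears in your write-up; your very first sentence invokes Theorem~\ref{NP12} as a black box, so you have assumed what you were asked to establish.

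What you have actually written is a proof of Theorem~\ref{SOPIi} (equivalently, Proposition~\ref{mi} with $G=F$ followed by a Cauchy--Schwarz step). Viewed as such, the argument is correct and interestingly different from the paper's. The paper applies the Poincar\'e inequality~\eqref{PM} on the original space $\Omega$ to $\langle DF,-DL^{-1}F\rangle_H$, obtaining two terms $A_1,A_2$ in~\eqref{A1A2} that involve $D^2F$ and $-D^2L^{-1}F$, and it then uses Mehler's formula together with the $L^2(\Omega)$-contractivity of $P_t$ to bound each. You instead apply Mehler's formula \emph{first}, writing $\langle DF,-DL^{-1}F\rangle_H=\int_0^\infty e^{-t}E'[\Phi_t\mid X]\,dt$, reduce to a $t$-uniform variance estimate for $\Phi_t$ via Cauchy--Schwarz in $t$ and conditional Jensen, and then apply Poincar\'e on the product Gaussian space $(X,X')$; the $L^2$-contractivity of $P_t$ in the paper is replaced by the distributional identity $Y^t\stackrel{d}{=}X$ in your computation of $E\|U_t\|_H^2$ and $E\|V_t\|_H^2$, and the algebraic inequality $U^2+2e^{-t}UV+V^2\leq 2U^2+2V^2$ plays the role of the paper's bound $(a+b)^2\leq 2a^2+2b^2$. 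Both routes produce the factor $4$ and hence the constants $c_{TV}=4/\sigma^2$, etc. So the mathematics is sound -- but it is a proof of the wrong theorem.
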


In order to prove Theorem \ref{SOPIi} we need a new crucial intermediate result, given in the following proposition. 

\bigskip

\begin{prop}\label{mi}
Let $F,G \in \mathbb{D}^{2,4}$ such that $E[F]=E[G]=0$. Then, it holds that
\begin{flalign*}
&E\[\(\operatorname{Cov}(F,G)-\langle DF,-DL^{-1}G\rangle_{L^2(A,\mu)}\)^2\] \leq \\ 
& \leq 2\,\int_{A\times A} \{E\[\(\(D^2F\otimes_1D^2F\)(x,y)\)^2\]\}^{1/2}\{E\[\(DG(x)DG(y)\)^2\]\}^{1/2}d\mu(x)d\mu(y)+ \\
&\quad+2\,\int_{A\times A}\{E\[\(DF(x)DF(y)\)^2\]\}^{1/2}\{E\[\(\(D^2G\otimes_1D^2G\)(x,y)\)^2\]\}^{1/2} d\mu(x)d\mu(y).
\end{flalign*}
\end{prop}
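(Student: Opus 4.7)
The plan is to apply the Gaussian Poincar\'e inequality \eqref{PM} to the random variable $T:=\langle DF,-DL^{-1}G\rangle_{H}$ with $H=L^{2}(A,\mu)$. By the Malliavin integration-by-parts formula combined with $\delta(-DL^{-1}G)=G-E[G]=G$, one has $E[T]=E[F\,\delta(-DL^{-1}G)]=E[FG]=\Cov(F,G)$, so the left-hand side of the proposition equals $\Var(T)$, which Poincar\'e bounds by $E[\|DT\|_{H}^{2}]$.

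I would then compute $D_{y}T$ by the product rule together with the commutation $D_{y}(-DL^{-1}G(x))=-D^{2}L^{-1}G(x,y)$:
\[
D_{y}T=\int_{A} D^{2}F(x,y)\,(-DL^{-1}G(x))\,d\mu(x)+\int_{A} DF(x)\,(-D^{2}L^{-1}G(x,y))\,d\mu(x).
\]
Applying $(a+b)^{2}\leq 2a^{2}+2b^{2}$ splits $E[\|DT\|_{H}^{2}]$ into two pieces. For each piece, expanding the squared integrand and interchanging the order of integration via Fubini makes the first-order contraction $\otimes_{1}$ appear: the first piece becomes $2\int_{A\times A}E[(D^{2}F\otimes_{1}D^{2}F)(x,y)\,(-DL^{-1}G(x))(-DL^{-1}G(y))]\,d\mu(x)d\mu(y)$, and the second piece is its symmetric counterpart with the roles of $F$ and $G$ (and of first- vs.\ second-order derivatives) swapped. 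Applying Cauchy--Schwarz in $L^{2}(\Omega)$ to each integrand factors out the required $\{E[((D^{2}F\otimes_{1}D^{2}F)(x,y))^{2}]\}^{1/2}$ (and analogously $\{E[((D^{2}G\otimes_{1}D^{2}G)(x,y))^{2}]\}^{1/2}$ for the other term).

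It remains to dominate the companion factors $\{E[((-DL^{-1}G(x))(-DL^{-1}G(y)))^{2}]\}^{1/2}$ and $\{E[((-D^{2}L^{-1}G\otimes_{1}-D^{2}L^{-1}G)(x,y))^{2}]\}^{1/2}$ by their $D$- and $D^{2}$-counterparts appearing on the right-hand side. For this I would invoke the Mehler-type representations
\[
-DL^{-1}G(x)=\int_{0}^{\infty}e^{-t}P_{t}DG(x)\,dt,\qquad -D^{2}L^{-1}G(x,y)=\int_{0}^{\infty}e^{-2t}P_{t}D^{2}G(x,y)\,dt,
\]
which are precisely the tool highlighted in the introduction as the engine of the improvement over \eqref{SPCh}--\eqref{SOPI-NPR}, and combine them with Jensen's inequality (both pointwise in $\omega$ and inside the probability integration), the $L^{2}(\Omega)$-contraction of $P_{t}$, its self-adjointness, and the semigroup identity $P_{t}P_{s}=P_{t+s}$. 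This is the main obstacle of the proof: since $\int_{0}^{\infty}e^{-t}dt=1$, the operator $(I-L)^{-1}=\int_{0}^{\infty}e^{-t}P_{t}dt$ acts as a probabilistic averaging, and one must argue that averaging each factor of a product like $DG(x)DG(y)$ does not inflate its $L^{2}(\Omega)$-norm -- a naive application of Minkowski on the Mehler integral only produces the weaker $L^{4}(\Omega)$-type bound $\|DG(x)\|_{L^{4}}\|DG(y)\|_{L^{4}}$, so extracting the sharper $\{E[(DG(x)DG(y))^{2}]\}^{1/2}$ stated in the proposition requires carefully combining the pointwise Jensen estimate $(P_{t}f)^{2}\leq P_{t}(f^{2})$ with the self-adjointness of $P_{t}$ and the total unit mass of $e^{-t}dt$. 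Once these two contraction-type inequalities are in place, summing the two pieces and tracking the factor $2$ from $(a+b)^{2}\leq 2a^{2}+2b^{2}$ delivers the bound of the proposition.
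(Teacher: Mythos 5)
Your skeleton agrees with the paper's: apply the Poincar\'e inequality \eqref{PM} to $T:=\langle DF,-DL^{-1}G\rangle_{H}$ (noting $E[T]=\Cov(F,G)$), compute $DT$ by the product rule, split with $(a+b)^2\le 2a^2+2b^2$, and invoke the Mehler representation $-DL^{-1}G=\int_0^\infty e^{-t}P_tDG\,dt$. However, there is a genuine gap in the way you deploy Mehler and Jensen: you first expand the square inside $E[\|DT\|_H^2]$ and only then try to absorb the $-DL^{-1}G$ factors. This produces, after Cauchy--Schwarz in $L^2(\Omega)$, the companion factor $\{E[((-DL^{-1}G(x))(-DL^{-1}G(y)))^2]\}^{1/2}$ and forces you to prove the domination
\[
\bigl\{E\bigl[\bigl((-DL^{-1}G(x))(-DL^{-1}G(y))\bigr)^2\bigr]\bigr\}^{1/2}\;\le\;\bigl\{E\bigl[(DG(x)DG(y))^2\bigr]\bigr\}^{1/2},
\]
which you correctly flag as the main obstacle but do not resolve. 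This inequality is in fact \emph{false} in general: writing $T_0:=(I-L)^{-1}$, one needs $E[(T_0U\cdot T_0V)^2]\le E[(UV)^2]$ for $U=DG(x)$, $V=DG(y)$, yet if $U,V$ are, say, smoothed versions of $\mathbbm{1}_{\{X_1>0\}}$ and $\mathbbm{1}_{\{X_1\le 0\}}$ one has $E[(UV)^2]\approx 0$ while $T_0U$ and $T_0V$ overlap, giving a strictly positive left-hand side. Your proposed repair --- pointwise Jensen $(P_tf)^2\le P_t(f^2)$ plus self-adjointness --- only gives $E[(T_0U\cdot T_0V)^2]\le E[U^2\,T_0^2(V^2)]$, and because $J_p(U^2)J_p(V^2)$ can be negative for $p\ge 1$ this is \emph{not} dominated by $E[U^2V^2]$.

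The paper avoids this entirely by applying Mehler and Jensen \emph{before} expanding the $L^2(A,\mu)$-square. Concretely, one writes $\langle D^2F,-DL^{-1}G\rangle_{H}=\int_0^\infty e^{-t}E[\langle D^2F,Dg(X_t)\rangle_H\mid X]\,dt$ with $X_t=e^{-t}X+\sqrt{1-e^{-2t}}X'$, applies Jensen for the probability measure $e^{-t}dt$ and the conditional expectation to the \emph{un-expanded} $L^2(A,\mu)$-norm square, and only \emph{then} expands. This forces the \emph{same} $X_t$ into both factors, so the conditional expectation collapses to a single application of the semigroup, $E[Dg(X_t)(x)Dg(X_t)(z)\mid X]=P_t\bigl(DG(x)DG(z)\bigr)$. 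After Cauchy--Schwarz, the companion factor is then $\{E[(P_t(DG(x)DG(z)))^2]\}^{1/2}$, which is bounded by $\{E[(DG(x)DG(z))^2]\}^{1/2}$ simply by the $L^2$-contractivity of $P_t$, and the $\int_0^\infty e^{-t}dt=1$ normalization absorbs the $t$-integral. If instead one expands the square first, as you do, one ends up with two \emph{independent} Mehler integrals $\int e^{-s}P_sDG(x)\,ds\cdot\int e^{-t}P_tDG(y)\,dt$, and there is no single $P_\tau$ acting on the product --- this is exactly why your approach stalls. To fix the proof you should reorder the steps as above; the rest of your outline then goes through.
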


\proof
Using the fact that $\operatorname{Cov}(F,G)=E\(\langle DF,-DL^{-1}G\rangle_{L^2(A,\mu)}\)$ and the Poincar\'e inequality \eqref{PM} (note that one needs $F,G\in \DD^{2,4}$ for $\langle DF,-DL^{-1}G\rangle_{L^2(A,\mu)}$ to be in $\DD^{1,2}$ and apply \eqref{PM}, see \cite[Lemma 3.2]{NPR:09}), we have 
\begin{flalign}
&E\[\(\operatorname{Cov}(F,G)-\langle DF,-DL^{-1}G\rangle_{L^2(A,\mu)}\)^2\] =\operatorname{Var}(\langle DF,-DL^{-1}G\rangle_{L^2(A,\mu)})\notag\\
&\leq E\(\norm{D\langle DF,-DL^{-1}G\rangle_{L^2(A,\mu)}}_{L^2(A,\mu)}^2\)\notag\\
&\leq2E\(\underbrace{\norm{\langle D^2F,-DL^{-1}G\rangle_{L^2(A,\mu)}}_{L^2(A,\mu)}^2}_{A_1}\)+2E\(\underbrace{\norm{\langle DF,-D^2L^{-1}G\rangle_{L^2(A,\mu)}}_{L^2(A,\mu)}^2}_{A_2}\), \label{A1A2}
\end{flalign}
where the last inequality follows from the fact that (again, according to \cite[Lemma 3.2]{NPR:09})
$$
D\langle DF,-DL^{-1}G\rangle_{L^2(A,\mu)}=\langle D^2F,-DL^{-1}G\rangle_{L^2(A,\mu)}+\langle DF,-D^2L^{-1}G\rangle_{L^2(A,\mu)}\,.
$$
Let us first consider $A_1$: given the fact that (see \eqref{2repr})
$$
-DL^{-1}G=\int_0^{\infty}e^{-t} P_tDGdt
$$
and using Mehler formula \eqref{Mehler}, we deduce that
\begin{flalign}
&\langle D^2F,-DL^{-1}G\rangle_{L^2(A,\mu)}=\langle D^2F, \int_0^{\infty}e^{-t} P_tDGdt\rangle_{L^2(A,\mu)}\label{step1_meh}\\
                                                                   &=\langle D^2F, \int_0^{\infty}e^{-t} E\(Dg\(e^{-t}X+\sqrt{1-e^{-2t}}X'\)\Big|X\)dt\rangle_{L^2(A,\mu)}\label{step2_meh}\\ 
                                                                               &=\int_0^{\infty}e^{-t} E\[\langle D^2F, Dg\(e^{-t}X+\sqrt{1-e^{-2t}}X'\)\rangle_{L^2(A,\mu)}\Big|X\]dt\,.\notag
\end{flalign}
Hence, Jensen inequality and Fubini theorem yield that
\begin{flalign*}
A_1&=\norm{\int_0^{\infty}e^{-t} E\[\langle D^2F, Dg\(e^{-t}X+\sqrt{1-e^{-2t}}X'\)\rangle_{L^2(A,\mu)}\Big|X\]dt}_{L^2(A,\mu)}^2\\
      &\leq\int_0^{\infty}e^{-t}E\[\norm{\langle D^2F, Dg\Big(\underbrace{e^{-t}X+\sqrt{1-e^{-2t}}X'}_{X_t}\Big)\rangle_{L^2(A,\mu)}}^2\Bigg|X\]dt\\
       &= \int_0^{\infty}e^{-t}E\[\norm{\int_A(D^2F)(x,y)Dg(X_t)(x)d\mu(x)}_{L^2(A,\mu)}^2\Bigg|X\]dt\\
&=\int_0^{\infty}e^{-t}\int_{A^2}\int_AD^2F(x,y)D^2F(z,y)E\[Dg(X_t)(x)Dg(X_t)(z)\bigg|X\]d\mu(x)d\mu(z)d\mu(y)dt\\
&=\int_0^{\infty}e^{-t}\int_{A^2}\int_AD^2F(x,y)D^2F(z,y)P_t\(DG(x)DG(z)\)d\mu(x)d\mu(z)d\mu(y)dt\,.
\end{flalign*}
Now we can use Cauchy-Schwarz inequality and the contractivity of $P_t$ to have 
\begin{flalign*}
&E\(A_1\) \leq\int_0^{\infty}e^{-t}\int_{A\times A} \{E\[\(\int_AD^2F(x,y)D^2F(z,y)d\mu(y)\)^2\]\}^{1/2} \times\\
              & \qquad\qquad \qquad\qquad\times \{E\[\(P_t\(DG(x)DG(z)\)\)^2\]\}^{1/2}d\mu(x)\,d\mu(z)\,dt\\
              &\leq\int_{A\times A}\{E\[\(\(D^2F\otimes_1D^2F\)(x,z)\)^2\]\}^{1/2} \{E\[\(DG(x)DG(z)\)^2\]\}^{1/2}d\mu(x)d\mu(z)\,.
\end{flalign*}
Similarly, using again in order Mehler formula \eqref{Mehler}, Jensen inequality, Fubini theorem, Cauchy-Schwarz inequality and the contractivity of $P_t$, we also obtain that
\begin{flalign*}
E\(A_2\)
              &\leq\int_{A\times A}\{E\[\(\(D^2G\otimes_1D^2G\)(x,y)\)^2\]\}^{1/2} \{E\[\(DF(x)DF(y)\)^2\]\}^{1/2}d\mu(x)d\mu(y).
\end{flalign*}
Finally,
$$
E\[\(\operatorname{Cov}(F,G)-\langle DF,-DL^{-1}G\rangle_{L^2(A,\mu)}\)^2\] \leq 2E(A_1)+2E(A_2),
$$
which gives the desired conclusion.
\endproof

\begin{oss}\label{useofMehler}
The crucial difference between our main result Theorem \ref{SOPIi} and Theorem \ref{SOPI-NPR-thm} in \cite{NPR:09} can be found in the proof of Proposition \ref{mi}. 
Indeed, the authors of \cite{NPR:09} use Cauchy-Schwarz inequality to bound both $A_1$ and $A_2$ in \eqref{A1A2} by $E\norm{DF}_{H}^2 E\norm{D^2F}_{op}^2$ and to obtain their form of second order Poincar\'e inequality. On the contrary, we only use the Mehler representation of the Ornstein-Uhlenbeck semigroup $P_t$, as showed in steps \eqref{step1_meh}-\eqref{step2_meh}, in order to get a bound for $A_1$ and $A_2$ in terms of directly computable quantities. Indeed, the problem of suboptimal rates in \cite{NPR:09} relies on the fact that the operator norm of $D^2F$ is not directly computable.
\end{oss}

\begin{proof}[Proof of Theorem \ref{SOPIi}]
Taking $G=F$ in Proposition \ref{mi}, one has that
\begin{flalign}
&E\[\abs{1-\langle DF,-DL^{-1}F\rangle_{L^2(A,\mu)}}\]\leq\sqrt{E\[\(1-\langle DF,-DL^{-1}F\rangle_{H}\)^2\]}\notag\\
&\leq\,2\,\sqrt{\int_{A\times A}\mu(dx)\mu(dy)\{E\[\(DF(x)DF(y)\)^2\]\}^{1/2}\{E\[\(\(D^2F\otimes_1D^2F\)(x,y)\)^2\]\}^{1/2}}.\label{tz}
\end{flalign}
As a consequence, combining \eqref{tz} with Theorem \ref{NP12}, we immediately obtain our main result.
\end{proof}


\section{Applications to infinite-dimensional Gaussian fields}

In this section we will apply our main findings to the following models:
\begin{itemize} 

\item[{\bf4.1}] Non-linear functionals of stationary Gaussian fields (improving the suboptimal rates of convergence obtained in \cite{NPR:09} and obtaining Breuer-Major type results), including:
\begin{itemize}
\item the increment of a Brownian motion;
\item the centred Ornstein-Uhlenbeck process;
\item the increments of a fractional Brownian motion.
\end{itemize}

\item[{\bf4.2}] Non-linear positive functional of a Brownian sheet on $\R^n$, exploding around singularities in the domain of integration, which is a generalization of limit theorems studied, with different techniques, in \cite{NP:05} and \cite{NP:09c}.

\end{itemize}


\subsection{Non-linear functionals of an isonormal stationary Gaussian process}\label{sec_NLFIGP}

In this section, we use our results in order to assess the distance in distribution between a general non-linear functional of a stationary Gaussian process and a Gaussian random variable. 
This application includes, as a special instance, the example considered in \cite[Section 6]{NPR:09}, where a suboptimal rate of convergence was attained. As already underlined, this fact gave the initial impetus for the present paper: in this section we will indeed obtain a better rate of convergence which is presumably optimal (see also Remark \ref{suboptimal} later in the text).

Our starting point is the following general setting, which is flexible enough for many specific applications that will be developed later in the text.

Let $X=\{X(h): h \in H\}$ be an isonormal Gaussian process over the real separable Hilbert space $H=L^2\(\R,\BB\(\R\),\mu\)$. Let $A\subset\R$ be such that $0<\mu\(A\)<\infty$ and let $\{K_a:a\in A\}\subset H$ be such that the scalar product $\l K_a,K_b\r=\varrho(a-b)$, with $\varrho(0)=1$, only depends on the difference $a-b$, for every $a,b \in A$, with 
$$
\int_\R\abs{\varrho(a)}d\mu(a)<\infty\,.
$$
We define $\{Y_a=X(K_a):a\in A\}$ and assume that the mapping $(\omega, a)\mapsto Y_a(\omega)$ is jointly measurable. 

Let $f:\mathbb{R}\rightarrow\mathbb{R}$ be a real function of class $C^2$ such that $E\abs{f(N)}<\infty$ and $E\abs{f''(N)}^4<\infty$, with $N\sim\mathcal{N}(0,1)$ (which implies $E\abs{f(N)}^4,E\abs{f'(N)}^4<\infty$, via the classical Poincar\'e inequality). We can define the functional $F$ of $\(Y_a\)_{a\in A}$ in the following way 
$$
F=\frac{1}{\sqrt{\mu\(A\)}}\int_{A}\, f\(Y_a\)-E\[f\(Y_a\)\]\,d\mu(a)
$$
and our result goes as follows.

\medskip

\begin{prop}\label{NLFIGP} 
Assume that 
\begin{equation}\label{condition}
\abs{K_a(s)}\leq g(a-s)\,, \quad \text{where $g$ is s.t.} \quad G^{\star}:=\sup_{h\in\R}\int_{\mathbb{R}}g(t+h)\,d\mu(t)<\infty\,.
\end{equation}
Then, assuming $\Var F=\sigma^2>0$, 
$$
d_{TV}\(\frac{F}{\sigma},N\)\leq \frac{1}{\sigma^2}\cdot\frac{C}{\sqrt{\mu\(A\)}},
$$
where $N\sim\NN(0,1)$ and $C$ is a constant that does not depend on $\mu(A)$.
\end{prop}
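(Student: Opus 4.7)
The plan is to apply Theorem \ref{SOPIi} directly to $F$, which by hypothesis has mean zero and variance $\sigma^2$: this gives $d_{TV}(F,\NN(0,\sigma^2))\le (4/\sigma^2)\sqrt{T}$, where $T$ denotes the double integral on the right-hand side of \eqref{SOPTV}. Since the total variation distance is scale-invariant, $d_{TV}(F,\NN(0,\sigma^2))=d_{TV}(F/\sigma,N)$, so the proposition reduces to proving $T=O(1/\mu(A))$. The assumptions $E|f(N)|<\infty$ and $E|f''(N)|^4<\infty$, together with the Gaussian Poincar\'e inequality, ensure $F\in\DD^{2,4}$, so Theorem \ref{SOPIi} does apply.

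The first step is a direct computation. Since $DY_a=K_a$, the Malliavin chain rule yields
\begin{equation*}
DF(x)=\frac{1}{\sqrt{\mu(A)}}\int_A f'(Y_a)\,K_a(x)\,d\mu(a),\qquad D^2F(x,y)=\frac{1}{\sqrt{\mu(A)}}\int_A f''(Y_a)\,K_a(x)K_a(y)\,d\mu(a),
\end{equation*}
and the identity $\int_\R K_a K_b\,d\mu=\varrho(a-b)$ gives
\begin{equation*}
(D^2F\otimes_1 D^2F)(x,y)=\frac{1}{\mu(A)}\int_{A^2} f''(Y_a)f''(Y_b)\,\varrho(a-b)\,K_a(x)K_b(y)\,d\mu(a)d\mu(b).
\end{equation*}
Since each $Y_a\sim\NN(0,1)$, Cauchy--Schwarz gives $E\[(f^{(k)}(Y_a)f^{(k)}(Y_b))^2\]\le E\[f^{(k)}(N)^4\]=:M_k^2$ for $k=1,2$, both finite by hypothesis. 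Minkowski's integral inequality then produces the pointwise $L^2(\Omega)$-bounds
\begin{align*}
\{E\[(DF(x)DF(y))^2\]\}^{1/2} &\le \frac{M_1}{\mu(A)}\int_{A^2}|K_a(x)K_b(y)|\,d\mu(a)d\mu(b),\\
\{E\[((D^2F\otimes_1 D^2F)(x,y))^2\]\}^{1/2} &\le \frac{M_2}{\mu(A)}\int_{A^2}|\varrho(a-b)K_a(x)K_b(y)|\,d\mu(a)d\mu(b).
\end{align*}

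The crucial estimate is a Fubini rearrangement. Multiplying the two pointwise bounds, integrating against $d\mu(x)d\mu(y)$, and using fresh dummy variables $(c,d)$ for the second $A^2$-integral gives
\begin{equation*}
T\le\frac{M_1M_2}{\mu(A)^2}\int_{A^4}|\varrho(a-b)|\Bigl(\int_\R|K_a(x)K_c(x)|d\mu(x)\Bigr)\Bigl(\int_\R|K_b(y)K_d(y)|d\mu(y)\Bigr)\,d\mu^{\otimes 4}.
\end{equation*}
Substituting the majorant $|K_\cdot(\cdot)|\le g(\cdot-\cdot)$ and integrating iteratively in $c,d,x,y$, each single integration falls under the translation-bound \eqref{condition} and contributes a factor $G^\star$. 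After the four integrations one is left with
\begin{equation*}
T\le\frac{M_1M_2\,(G^\star)^4}{\mu(A)^2}\int_{A^2}|\varrho(a-b)|\,d\mu(a)d\mu(b)\le\frac{M_1M_2(G^\star)^4\,\|\varrho\|_{L^1(\R,\mu)}}{\mu(A)},
\end{equation*}
where the last step uses $\int_\R|\varrho|\,d\mu<\infty$ from the standing hypotheses. Plugging this into the inequality of the opening paragraph completes the argument.

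The main technical obstacle is the four-fold application of the translation bound $G^\star$: one has to check that each iterated $g$-integral encountered along the way genuinely matches the form $\int g(t+h)\,d\mu(t)$ controlled by $G^\star$. This is immediate when $g$ is symmetric and $\mu$ is Lebesgue, which covers all the concrete applications of Section \ref{sec_NLFIGP}; at the abstract level it can be arranged by replacing $g$ with its symmetrisation $g(\cdot)\vee g(-\cdot)$, which preserves the pointwise domination $|K_a(s)|\le g(a-s)$ at the cost of a harmless doubling of $G^\star$. Once this bookkeeping is settled, the rate $1/\sqrt{\mu(A)}$ is dictated by the single surviving linear factor $\mu(A)$ in $\int_{A^2}|\varrho(a-b)|\,d\mu(a)d\mu(b)\le\mu(A)\|\varrho\|_{L^1}$ competing against the prefactor $1/\mu(A)^2$ coming from the $1/\sqrt{\mu(A)}$ normalisation of $F$, a ratio which is then square-rooted in the application of Theorem \ref{SOPIi}.
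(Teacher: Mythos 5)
Your proof follows the same route as the paper: compute $DF,D^2F$ via the chain rule and stochastic Fubini, bound the resulting $L^2(\Omega)$-norms pointwise in $(x,y)$ using Minkowski's integral inequality together with the $L^4$ Gaussian moments of $f'$ and $f''$, multiply, integrate in $(x,y)$, Fubini-rearrange, and peel off the $g$-integrals to reduce to $\mu(A)^{-1}\int_{A^2}|\varrho(a-b)|\,d\mu\,d\mu\leq\|\varrho\|_{L^1}$, from which the rate $\mu(A)^{-1/2}$ follows after the outer square root. The extra observation in your final paragraph is a genuine one: the paper's own proof replaces each $g$-integral by $\int_\R g(w)\,d\mu(w)$ without comment, tacitly relying on translation invariance of $\mu$ and an orientation-insensitivity of $g$ that the bare condition \eqref{condition} does not literally grant, so you are right to flag that some symmetrisation or added hypothesis is needed at the fully abstract level (and your proposed fix, like the paper's shortcut, still needs $\mu$ to be reflection-invariant, though this is harmless in every application considered).
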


\proof
Without loss of generality, let us set $\sigma=1$. By definition of Malliavin derivatives with respect to $X$ and thanks to the stochastic Fubini theorem (see \cite{Ve:12}), we have that 
\begin{equation}\label{Mder+int}
DF=\frac{1}{\sqrt{\mu(A)}}\int_{A}f'\(Y_a\)K_a(x)d\mu(a)\,,                                 
\quad
D^2F=\frac{1}{\sqrt{\mu(A)}}\int_{A}f''\(Y_a\)K_a(x)K_a(y)d\mu(a)\,,                              
\end{equation}
where we recall that $Y_a=X(K_a)$.

Now, Theorem \ref{SOPIi} yields that
\begin{flalign*}
&d_{TV}\(\frac{F}{\sqrt{\operatorname{Var}F}},N\)^2\leq16\,\int_{\mathbb{R}^2}\sqrt{E\[\(\(D^2F\otimes_1D^2F\)(x,y)\)^2\]E\[\(DF(x)DF(y)\)^2\]}d\mu(x)d\mu(y)
\end{flalign*}
and one has to assess the quantities on the right hand side of the previous inequality. We have
\begin{flalign*}
&E\[\(\(D^2F\otimes_1D^2F\)(x,y)\)^2\]=\\
&\qquad=E\[\(\frac{1}{\mu(A)}\int_{A^2}f''\(Y_a\)f''\(Y_b\)\,\varrho(a-b)K_a(x)K_b(y)\,d\mu(a)d\mu(b)\)^2\]\\
&\qquad\leq\frac{E\abs{f''(N)}^4}{\mu(A)^2}\(\int_{A^2}\abs{\varrho(a-b)K_a(x)K_b(y)}d\mu(a)d\mu(b)\)^2
\end{flalign*}
and, similarly, 
\begin{flalign*}
&E\[\(DF(x)DF(y)\)^2\]
\leq\frac{E\abs{f'(N)}^4}{\mu(A)^2}\(\int_{A^2}\abs{K_a(x)K_b(y)}d\mu(a)d\mu(b)\)^2.
\end{flalign*}
Consequently, we obtain that
\begin{flalign*}
&d_{TV}\(\frac{F}{\sqrt{\operatorname{Var}F}},N\)
\leq\frac{4\(E\abs{f''(N)}^4E\abs{f'(N)}^4\)^{1/4}}{\mu(A)}\,\times\\
&\times\{\int_{A^4}\abs{\varrho(a-b)}\int_{\mathbb{R}}\abs{K_a(x)K_c(x)}d\mu(x)\int_{\mathbb{R}}\abs{K_b(y)K_d(y)}d\mu(y)\,d\mu(a)d\mu(b)d\mu(c)d\mu(d)\}^{1/2}\\
&\leq\frac{c}{\mu(A)}\{\int_{A^2}\abs{\varrho(a-b)}\int_{\mathbb{R}}g(a-x)\(\int_{\mathbb{R}}g(c-x)d\mu(c)\)d\mu(x)\,\times\right.\\
&\qquad\qquad\qquad\qquad\left.\times\int_{\mathbb{R}}g(b-y)\(\int_{\mathbb{R}}g(d-y)d\mu(d)\)d\mu(y)\,d\mu(a)d\mu(b)\}^{1/2}\\
&=\frac{c}{\mu(A)}\{\(\int_{\mathbb{R}}g(w)d\mu(w)\)^4\int_{A^2}\abs{\varrho(a-b)}d\mu(a)d\mu(b)\}^{1/2}\\
&\leq\frac{c}{\mu(A)}\(\int_{\mathbb{R}}g(w)d\mu(w)\)^2\{\mu(A)\int_{\mathbb{R}}\abs{\varrho(x)}d\mu(x)\}^{1/2}=\frac{c}{\sqrt{\mu(A)}}\,,
\end{flalign*}
where 
$$c=4\(E\abs{f''(N)}^4E\abs{f'(N)}^4\)^{1/4}\(\int_{\mathbb{R}}g(w)d\mu(w)\)^2\{\int_{\mathbb{R}}\abs{\varrho(x)}d\mu(x)\}^{1/2}\,,$$ 
which is the desired result.
\endproof

In the next two sections we will see how this result can be applied to more concrete situations.


\subsubsection{Non-linear functionals of continuous stationary Gaussian processes}
In this subsection we apply Proposition \ref{NLFIGP} to more concrete examples and we show how our findings significantly improve the ones in \cite{NPR:09}, see also the discussion in Remark \ref{suboptimal}.

Fix $X$ to be the isonormal Gaussian process generated by the two-sided Brownian motion $\{B_t\}_{t\in \R}$, i.e. 
\begin{equation}\label{IGPBM}
X=\{X(h):=\int_{\R}h(s)\,dB_s\,:\,h\in L^2\(\mathbb{R},\mathscr{B}\(\mathbb{R}\),dx\)\}\,,
\end{equation}
where $B_t=B_1(t)$ when $t\geq0$, $B_t=B_2(-t)$ when $t<0$ and $B_1,B_2$ are two independent standard Brownian motions.
We will apply Proposition \ref{NLFIGP} to three continuous-time models in order to estimate the rate of convergence of some non-linear functionals of continuous-time stationary Gaussian processes towards a Gaussian distribution, all having the following functional form 
\begin{equation}\label{stat_funct}
F_T=\frac{1}{\sqrt{(b-a)T}}\int_{aT}^{bT}\(f\(Y_t\)-E\[f\(N\)\]\)dt\,, \quad a,b\in\R,\, b>a,\, T>0,
\end{equation}
where $Y_t=X\(K_t\)$ for some $K_t\in L^2\(\mathbb{R}_{+},\mathscr{B}\(\mathbb{R}_{+}\),dx\)$, i.e. $Y_t=\int_{\R_+}K_t(s)\,dB_s$.

For the rest of the section, we will assume that $\lim_{T\to\infty} \Var F_T$ exists and it is non-zero and finite. A sufficient condition for its existence is that $f$ is symmetric, see \cite[Proposition 6.3]{NPR:09}.

\subparagraph{The increments of a Brownian motion and the centred Ornstein-Uhlenbeck process.}
The models considered in this paragraph are trivial instances of when Proposition \ref{NLFIGP} holds for $F_T$ as in \eqref{stat_funct}, trivial in the sense that one can easily check that condition \eqref{condition} holds. We start with the case when $Y_t=B_{t+1}-B_{t}\stackrel{law}{=}\int_{\R_+}\1_{[t,t+1)}(s)dB_s$; in this case $K_t=\1_{[t,t+1)}$ and $Y_t$ is stationary, as $\l K_t,K_s\r=1_{[-1,1]}(t-s)=\varrho(t-s)$, with $\int_\R \abs{\varrho(x)}\,dx=2<\infty$. Moreover we have that 
$$
K_t(x)=\1_{[t,t+1)}(x)=\1_{[0,1)}(x-t)=g(x-t) \quad\text{where $g$ is s.t.}\quad \int_\R g(y)\,dy=1<\infty\,.
$$ 

For the second trivial instance one takes $Y_t$ as a centred Ornstein-Uhlenbeck process, namely $Y_t=X\(\sigma\, e^{-\theta(t-x)}\,\1_{(-\infty,t)}(x)\)$, with $\sigma,\theta>0$; this means that $K_t=\sigma\, e^{-\theta(t-x)}\,\1_{(-\infty,t)}(x)$,
$\l K_t,K_s\r=\sigma^2\,e^{-\theta\abs{t-s}}/2\theta=\varrho(t-s)$, i.e. $Y_t$ is stationary, with $\int_\R \abs{\varrho(x)}\,dx=\sigma^2/\theta^2<\infty$. Moreover, we can easily check that 
$$
K_t(x)\leq \sigma\, e^{-\theta(t-x)}\,\1_{(-\infty,1)}(x-t)=g(x-t) \quad\text{where $g$ is s.t.}\quad \int_\R g(y)\,dy<\infty\,.
$$
Thus in both cases condition \eqref{condition} is satisfied and we have that 
$$d_{TV}\(\frac{F_T}{\sqrt{\Var F_T}},N\)\leq \frac{C}{\sqrt{T}}\,,$$
which is a presumably optimal rate for the convergence of $F_T$ to a Gaussian distribution.
\begin{oss}\label{suboptimal}
In \cite[Theorem 6.1]{NPR:09}, the authors obtain a certainly suboptimal rate of convergence for $F_T$, that is
$$
d_{W}\(\frac{F_T}{\sqrt{\Var F_T}},N\)\leq \frac{C}{\,T^{1/4}\,}\,.
$$
This was partly due to the fact that the operator norm of $D^2F_T$ in \eqref{SOPI-NPR} cannot be directly computed, so the authors had to move farther away from the distance in distribution and, using Cauchy-Schwarz inequality, bound $\norm{D^2F_T}_{op}^2$ with $\norm{D^2F_T\otimes_1 D^2F_T}^2_{H^{\otimes 2}}$. 
\end{oss}


\subparagraph{The increments of a fractional Brownian motion.}

We will now show that Proposition \ref{NLFIGP} applies to the case when the process $\{Y_t\}_{t\geq0}$ is defined as the increment of a fractional Brownian motion with Hurst parameter $H<1/2$, that is $Y_t:=B^H_{t+1}-B^H_t$, where $\{B_t^H:t\geq0\}$ is a centred Gaussian process with covariance function $E\[B_t^HB_s^H\]=\frac{1}{2}(t^{2H}+s^{2H}-\abs{t-s}^{2H})$. It is well known that $Y_t$ is stationary and that its correlation function is integrable, see \cite[Proposition 2.2]{Nou:12} and \cite[page 13]{NPR:09}.
The fractional Brownian motion $B_t^H$ has more than one representation in terms of stochastic integral with respect to a two-sided Brownian motion $\{B_t\}_{t\in \R}$, namely in terms of kernels of the isonormal Gaussian process $X$ defined in \eqref{IGPBM}, and we take the following one (see \cite[Section 2.3]{Nou:12})
$$
B^H_t=X\(\underbrace{\frac{1}{c_H}\,\[\(t-u\)^{H-\frac{1}{2}}-\(-u\)^{H-\frac{1}{2}}\1_{(-\infty,0]}(u)+\(t-u\)^{H-\frac{1}{2}}\1_{[0,t)}(u)\]}_{=:\widehat{K}_t(u)}\),
$$
where $c_H$ is a finite constant depending only on $H$.
Hence, thanks to the linearity of $X$, $Y_t=X\(K_t(u)\)$ where $K_t(u)=\widehat{K}_{t+1}(u)-\widehat{K}_t(u)$ and consequently we have that
\begin{flalign*}
\abs{K_t(u)}&=\abs{\widehat{K}_{t+1}(u)-\widehat{K}_t(u)}
=\frac{1}{c_H}\abs{\(t+1-u\)^{H-\frac{1}{2}}\mathbbm{1}_{\{u\in(-\infty,t+1)\}}-\(t-u\)^{H-\frac{1}{2}}\mathbbm{1}_{\{u\in(-\infty,t)\}}}\\
&=\frac{1}{c_H}\abs{\(t-u+1\)^{H-\frac{1}{2}}\mathbbm{1}_{\{(t-u)\in(-1,\infty)\}}-\(t-u\)^{H-\frac{1}{2}}\mathbbm{1}_{\{(t-u)\in(0,\infty)\}}}=:g(t-u)\,.
\end{flalign*}
It remains to prove that $\int g(x)dx<\infty$. We have,
\begin{flalign*}
g(x)&=\frac{1}{c_H}\abs{\(x+1\)^{H-\frac{1}{2}}\mathbbm{1}_{\{x\in(-1,\infty)\}}-x^{H-\frac{1}{2}}\mathbbm{1}_{\{x\in(0,\infty)\}}}\\
&\leq\frac{1}{c_H}\abs{\(x+1\)^{H-\frac{1}{2}}\mathbbm{1}_{\{x\in(-1,0]\}}}+\frac{1}{c_H}\abs{\(x+1\)^{H-\frac{1}{2}}\mathbbm{1}_{\{x\in(0,\infty)\}}-x^{H-\frac{1}{2}}\mathbbm{1}_{\{x\in(0,\infty)\}}}\\
&=:\frac{1}{c_H}\(g_1(x)+g_2(x)\)
\end{flalign*}
Now,
$$
\int_\R g_1(x)dx=\int_{-1}^0\abs{\(x+1\)^{H-\frac{1}{2}}}dx=\frac{1}{H+\frac{1}{2}}<\infty
$$
and
$$\displaylines{
\int_\R g_2(x)dx=\int_{0}^{\infty}\abs{\(x+1\)^{H-\frac{1}{2}}-x^{H-\frac{1}{2}}}dx.
}$$
The function $g_2$ is integrable around $0$ and, for $N$ large enough,
\begin{flalign*}
&\int_{N}^{\infty}\abs{\(x+1\)^{H-\frac{1}{2}}-x^{H-\frac{1}{2}}}dx=\int_{N}^{\infty}\abs{x^{H-\frac{1}{2}}\(\(1+\frac{1}{x}\)^{H-\frac{1}{2}}-1\)}dx\\
&\quad=\int_{N}^{\infty}\abs{x^{H-\frac{3}{2}}\(\frac{\(1+1/x\)^{H-\frac{1}{2}}-1}{1/x}\)}dx\,\sim\int_{N}^{\infty}\abs{x^{H-\frac{3}{2}}\(H-\frac{1}{2}\)}dx<\infty \, ,
\end{flalign*}
for each $H<1/2$. Thus we just proved that $\int g(x)dx<\infty$ and consequently that the increment of a fractional Brownian motion with Hurst parameter $H\in \(0,\frac{1}{2}\)$ satisfies conditions of Proposition \ref{NLFIGP}. This fact leads to the following result which is, to the best of our knowledge, new.
\begin{cor}
Fix $a<b$ in $\mathbb{R}$ and, for any $T>0$, consider the integral functional
$$
F_T=\frac{1}{\sqrt{(a-b)T}}\int_{aT}^{bT}\(f\(B^H_{u+1}-B^H_u\)-E\[f\(N\)\]\)du\,,
$$ 
where $B^H_t$ is a fractional Brownian motion with Hurst parameter $H<1/2$.
Then
$$
d_{TV}\(\frac{F_T}{\sqrt{\Var F_T}},N\)\leq \frac{C}{\sqrt{T}},
$$
where $N\sim\NN(0,1)$ and $C$ is a constant that does not depend on $T$.
\end{cor}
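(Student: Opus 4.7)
The plan is to recognise $F_T$ as a direct instance of the framework of Proposition \ref{NLFIGP}. I would set $A=[aT,bT]$ equipped with Lebesgue measure, so that $\mu(A)=(b-a)T$, and identify $Y_t=B^H_{t+1}-B^H_t$ with $X(K_t)$, where $X$ is the isonormal Gaussian process over $L^2(\R)$ generated by the two-sided Brownian motion and $K_t=\widehat K_{t+1}-\widehat K_t$, with $\widehat K_t$ the Volterra-type kernel realising $B^H_t$. Under this identification, Proposition \ref{NLFIGP} immediately yields $d_{TV}(F_T/\sqrt{\Var F_T},N)\leq C/\sqrt{(b-a)T}$, and the factor $(b-a)^{-1/2}$ can then be absorbed into the constant.

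The hypotheses of Proposition \ref{NLFIGP} that need to be verified in this setting are (i) stationarity of $\{Y_t\}$ together with $\int_\R |\varrho|\,d\mu<\infty$, and (ii) the pointwise domination $|K_t(u)|\leq g(t-u)$ for some $g\in L^1(\R)$. Point (i) is classical: the increments of fractional Brownian motion are stationary and, for $H<1/2$, the asymptotic $\varrho(n)\sim H(2H-1)|n|^{2H-2}$ forces integrability of $\varrho$ on $\R$ (see e.g.\ \cite[Proposition 2.2]{Nou:12}).

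For (ii), a direct pointwise computation starting from the Volterra representation of $B^H$ gives
$$
|K_t(u)|\leq \tfrac{1}{c_H}\bigl|(t-u+1)^{H-1/2}\mathbbm{1}_{(-1,\infty)}(t-u)-(t-u)^{H-1/2}\mathbbm{1}_{(0,\infty)}(t-u)\bigr|=:g(t-u).
$$
I would then split $\int_\R g(x)\,dx$ into three regions. On $(-1,0)$ only the first summand contributes and is integrable because $H-1/2>-1$. On a bounded interval $(0,N)$ each summand is again a separately integrable power. On the tail $(N,\infty)$ the two terms individually diverge but their difference does not: factoring out $x^{H-1/2}$ and Taylor expanding $(1+1/x)^{H-1/2}-1=O(1/x)$ produces $g(x)=O(x^{H-3/2})$, which is integrable since $H<1/2$ forces $H-3/2<-1$.

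The main obstacle is precisely this tail estimate: bounding each incremental Volterra kernel separately would give a divergent integral, so one has to exploit the cancellation built into the difference $\widehat K_{t+1}-\widehat K_t$. Once $g\in L^1(\R)$ is established, the supremum $G^\star$ appearing in \eqref{condition} is finite by translation invariance, and Proposition \ref{NLFIGP} closes the argument with the announced rate $C/\sqrt{T}$.
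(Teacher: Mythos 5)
Your proposal is correct and follows essentially the same route as the paper: identify $K_t=\widehat K_{t+1}-\widehat K_t$ via the Volterra representation, show $|K_t(u)|\leq g(t-u)$, verify $g\in L^1(\R)$ by handling the singular interval $(-1,0]$ directly and the tail by exploiting the cancellation $(x+1)^{H-1/2}-x^{H-1/2}=O(x^{H-3/2})$, and then invoke Proposition~\ref{NLFIGP}. The only (minor) addition you make is to observe explicitly that $G^\star=\int_\R g$ by translation invariance of Lebesgue measure, a point the paper leaves implicit.
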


\begin{oss}
\begin{itemize}
\item[(i)] Our result does not guarantee that $\lim_{T\to\infty}\Var F_T$ exists. A sufficient condition to have $\lim_{T\to\infty}\Var F_T\in(0,\infty)$ is that $f$ is symmetric, see \cite[Proposition 6.3]{NPR:09}.
\item [(ii)] Note that when $H=\frac{1}{2}$, $B^H_t$ is a classical Brownian motion and Proposition \ref{NLFIGP} applies. While in the case where $H>\frac{1}{2}$ our result does not apply.
\end{itemize}
\end{oss}


\subsubsection{Non-linear functionals of stationary Gaussian sequences: \\ a Breuer-Major type result}

Proposition \ref{NLFIGP} can be \emph{discretised} to obtain a Breuer-Major type CLT when the Hermite rank of the subordinated Gaussian sequence is greater or equal to $1$\footnote{In general, any CLT involving conditions on Hermite ranks and series of covariance coefficients is usually called a \emph{Breuer-Major Theorem}, in honor of the seminal paper \cite{BM:83}. }. 
Let $X = \{X_k : k \in \Z\}$ be a centered stationary Gaussian sequence with unit variance and such that each $X_k=X(K_k)$, where $X$ is still taken as in \eqref{IGPBM}. For all $\nu \in \Z$, we set $\rho(\nu) = E[X_0X_{\nu}]$ and we assume that
$$\sum_{\nu=-\infty}^{+\infty}\abs{\varrho(\nu)}<\infty\,\,.$$ 
Let
$$
F_n=\frac{1}{\sqrt{n}}\sum_{k=1}^n f\(X_k\)-E\[f\(X_k\)\],
$$
where $f:\mathbb{R}\rightarrow\mathbb{R}$ is a real function of class $C^2$ such that $E\abs{f(N)}<\infty$ and $E\abs{f''(N)}^4<\infty$ when $N\sim\mathcal{N}(0,1)$. We have the following Breuer-Major type result.

\begin{cor}\label{BM} 
Assume that $\abs{K_k(x)}\leq g(k-x)$, 
where $g$ is such that $\int_\R g(y)<\infty$. Then, if $\lim_{n\to\infty}\Var F_n \in(0,\infty)$,
$$
d_{TV}\(\frac{F_n}{\sqrt{\Var F_n}},N\)\leq \frac{C}{\sqrt{n}},
$$
where $N\sim\NN(0,1)$ and $C$ is a constant that does not depend on $n$.
\end{cor}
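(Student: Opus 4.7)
The plan is to apply Theorem \ref{SOPIi} to $F_n/\sigma_n$ (with $\sigma_n^2 = \Var F_n$) and to carry out the discrete analogue of the argument that proved Proposition \ref{NLFIGP}: integrals over $A$ become sums over $\{1,\dots,n\}$, and $\mu(A)$ is replaced by $n$.

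First I would compute the Malliavin derivatives explicitly. By linearity of $D$ and the representation $X_k = X(K_k)$,
\begin{align*}
DF_n(x) &= \frac{1}{\sqrt{n}}\sum_{k=1}^n f'(X_k)\,K_k(x),\\
D^2F_n(x,y) &= \frac{1}{\sqrt{n}}\sum_{k=1}^n f''(X_k)\,K_k(x)K_k(y).
\end{align*}
Using $\langle K_j,K_k\rangle_H = \rho(j-k)$, this yields
$$(D^2F_n \otimes_1 D^2F_n)(x,y) = \frac{1}{n}\sum_{j,k=1}^n f''(X_j)f''(X_k)\,\rho(j-k)\,K_j(x)K_k(y).$$
Applying Cauchy-Schwarz in $L^4(\Omega)$, together with the finiteness of $E|f''(N)|^4$ and $E|f'(N)|^4$ (the latter inherited from the classical Poincar\'e inequality), I would obtain the pointwise estimates
$$\{E[((D^2F_n\otimes_1 D^2F_n)(x,y))^2]\}^{1/2} \leq \frac{C}{n}\sum_{j,k=1}^n |\rho(j-k)|\,g(j-x)g(k-y),$$
and an analogous $\rho$-free bound for $\{E[(DF_n(x)DF_n(y))^2]\}^{1/2}$.

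Inserting these bounds into \eqref{SOPTV}, multiplying them, and integrating over $(x,y)\in\R^2$ reduces the task to estimating
$$\frac{1}{n^2}\sum_{j,k,l,m=1}^n |\rho(j-k)|\,\Phi(j,l)\,\Phi(k,m),$$
where $\Phi(a,b) := \int_\R g(a-x)g(b-x)\,dx = (g*\widetilde g)(a-b)$, with $\widetilde g(z) := g(-z)$; note that $g*\widetilde g$ is integrable on $\R$ by Young's inequality. I would sum over $l$ and $m$ first to obtain a uniform-in-$j,k$ bound of order $O(1)$, then sum over $k$ using $\sum_\nu |\rho(\nu)|<\infty$ to eliminate a further index, and finally sum over $j \in \{1,\dots,n\}$, which produces an overall factor of $n$. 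Dividing by $n^2$ and taking the square root as in Theorem \ref{SOPIi} yields $d_{TV}(F_n/\sigma_n, N) \leq C/\sqrt{n}$.

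The main technical delicacy is the uniform control $\sup_j \sum_{\ell\in\Z}(g*\widetilde g)(\ell-j) \leq M$, which is the discrete counterpart of the continuous bound $\int_A g(c-x)\,d\mu(c) \leq \|g\|_1$ used in Proposition \ref{NLFIGP}. This uniform summability does not follow from $\int g < \infty$ alone, but it holds under the mild regularity of $g$ present in the concrete examples of the previous subsection (e.g., $g$ bounded with suitable decay), which is what makes the discretisation effective in practice.
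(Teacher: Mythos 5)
Your proposal follows exactly the discretisation of Proposition~\ref{NLFIGP} that the paper has in mind (the paper does not write out a separate proof of Corollary~\ref{BM}; it simply asserts the discretisation): explicit Malliavin derivatives, $L^4$--Cauchy--Schwarz to extract $E|f''(N)|^4$ and $E|f'(N)|^4$, and then summing the resulting expression $\tfrac{1}{n^2}\sum_{j,k,l,m}|\rho(j-k)|\,\phi(j-l)\,\phi(k-m)$ with $\phi := g*\widetilde g$, exploiting the summability of $\rho$ to leave a single free index and hence an $O(1/n)$ bound inside the square root. Your computations of $DF_n$, $D^2F_n$ and the contraction are correct, and the final bookkeeping (absorb $\Var F_n \to \sigma^2 \in (0,\infty)$, take the square root, pick up the factor $4/\sigma_n^2$ from Theorem~\ref{SOPIi}) is right.

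The point you flag at the end is genuine and worth stressing: to close the argument one needs $\sum_{\nu\in\Z}(g*\widetilde g)(\nu)<\infty$, and this does \emph{not} follow from $\int_\R g<\infty$ alone (e.g.\ $g(x)=|x|^{-3/4}\mathbbm{1}_{[-1,1]}(x)$ is integrable but already has $(g*\widetilde g)(0)=\infty$). This is precisely the discrete counterpart of the hypothesis $G^\star<\infty$ written explicitly in Proposition~\ref{NLFIGP} but dropped in the corollary's statement. A sufficient reformulation would be to require $\sup_{x\in\R}\sum_{l\in\Z}g(l-x)<\infty$ (whence $\sum_\nu (g*\widetilde g)(\nu)\le \|g\|_{L^1}\sup_x\sum_l g(l-x)<\infty$), or simply to assume $\sum_\nu(g*\widetilde g)(\nu)<\infty$ directly; both hold in all three concrete examples (Brownian increments, discrete OU, fBm increments with $H<1/2$), so the corollary's applications are unaffected. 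So: your proof is the intended one, and you have correctly identified a small hypothesis the paper's statement implicitly uses but does not record.
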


Hence, as $n\to\infty$, we obtain a quantitative central limit theorem. 

\begin{oss}
The assumptions of Corollary \ref{BM} trivially holds, as in the continuous case showed in the previous section, for both the case of the increment of a Brownian motion, that is $X_k=B_{k+1}-B_{k}$, and the case of a discrete centred Ornstein-Uhlenbeck process, namely 
$$X_k=\gamma X_{k-1}+\sigma \(B_k-B_{k-1}\)\,,$$ 
where $\gamma\in (0,1)$ and $\sigma\in \R_+$.
Indeed, in the latter case, one has that (see \cite{Qin:11})
$$X_k\stackrel{law}{=}X\(\sigma\,\gamma^{k-1-[x]}\,\1_{[0,k)}(x)\)\,.$$
Moreover, Corollary \ref{BM} holds for the increment of a fractional Brownian motion, that is $X_k=B_{k+1}^H-B_{k}^H$, and since the computations are analogous of the ones in the previous section we will not show them here.
\end{oss}


\subsection{Non-linear functionals of a Brownian sheet} \label{sheet}

As a final application in the infinite-dimensional setting, we use our bound in order to estimate the rate of convergence of a non-linear and positive functional of a Brownian sheet towards a standard Gaussian distribution. A particular instance of this model was firstly studied in \cite{PY:04} and then in \cite{NP:05}, where the authors considered a quadratic functional and presented only qualitative central limit theorems. A first quantitative and exact CLT, still just in the case of a quadratic functional, was then presented in \cite{NP:09c}. The rate of convergence obtained therein is exact and as a consequence we will show that also our rate is optimal, as it does not depend on the functional form of the considered model, see Remark \ref{exact}.

A {\it Brownian sheet} $W$ on $[0,1]^n$ is a centred Gaussian process 
$$
W=\{W(x_1,\dots,x_n): (x_1,\dots,x_n)\in[0,1]^n\}
$$
with covariance function $
E\[W(x_1,\dots,x_n)W(y_1,\dots,y_n)\]=\prod_{i=1}^n \(x_i \wedge y_i\)\,.
$
Note that the Gaussian space generated by $W$ can be identified with an isonormal Gaussian process $X$ over $L^2\([0,1]^n,dx_1\cdots dx_n\)$, namely 
$$
W(x_1,\dots,x_n)=\int_{[0,1]^n}\mathbbm{1}_{[0,x_1]}(u_1)\cdots\mathbbm{1}_{[0,x_n]}(u_n)\,dB_{u_1}\cdots dB_{u_n}\,,
$$
where $\{B_t\}_{t\geq0}$ is a standard Brownian motion.
Let $\hat{F}_{\eps}:=(\log1/\eps)^{-n/2}\(F_{\eps}-E\[F_\eps\]\)$, with 
$$
F_{\eps}=\int_{[\eps,1]^n}f\(\frac{W(x_1,\dots,x_n)}{\sqrt{x_1\cdots x_n}}\)d\nu_n(x_1,\dots,x_n)\,,
$$
where $d\nu_n(x_1,\dots,x_n):=\frac{dx_1\cdots dx_n}{x_1\cdots x_n}$, with $d\nu(x):=d\nu_1(x)=\frac{dx}{x}$, and $f:\R\to\R_+$ is a positive function of class $C^2$ such that, for $N\sim\NN\(0,1\)$, $E\[f(N)^2\]<\infty$ and $f$ admits the Hermite expansion $f(x)=\sum_{q=0}^\infty \frac{c_q}{q!}\,H_q(x)$ $\P$-a.s. 

\medskip

\begin{oss}\label{jeu}
First of all, note that
\begin{flalign*}
E\[F_{\eps}\]&=\int_{[\eps,1]^n}E\[f\(\frac{W(x_1,\dots,x_n)}{\sqrt{x_1\cdots x_n}}\)\]d\nu_n(x_1,\dots,x_n)\\
&=\int_{[\eps,1]^n}E\[f\(N\)\]d\nu_n(x_1,\dots,x_n)\\
&=E\[f\(N\)\]\nu_n([\eps,1]^n)=E\[f\(N\)\]\(\log \frac{1}{\eps}\)^n\stackrel{\eps\to0}{\xrightarrow{\hspace*{0.7cm}}}+\infty\,.
\end{flalign*}
Therefore, a modification of Jeulin's Lemma (see \cite[Lemma 1]{Je:80}, as well as \cite{Pe:01}) yields that $\lim_{\eps\to0}F_\eps=+\infty$, $\P$-a.s.
In particular, note that the normalisation constant $\(\log\frac{1}{\eps}\)^{-n/2}$ is chosen in order for $\hat{F}_{\eps}$ to have the variance converging towards a positive finite constant as $\eps$ goes to zero. Indeed, denoting $\mathbf{x}_n:=(x_1,\dots,x_n)$, we have that
\begin{flalign*}
\operatorname{Var}\(F_{\eps}\)&=E(F_{\eps}^2)-\[E(F_{\eps})\]^2\\
&=\int_{[\eps,1]^{2n}}\operatorname{Cov}\bigg(f\(X\(K_{\mathbf{x}}\)\),f\(X\(K_{\mathbf{y}}\)\)\bigg)\,d\nu_n(\mathbf{x}_n)d\nu_n(\mathbf{y}_n)\\
&=\sum_{q=1}^{\infty}\frac{c_q^2}{q!}\(\int_{[\eps,1]^2}\(\frac{x\wedge y}{\sqrt{xy}}\)^q\,d\nu(x)d\nu(y)\)^n=2^n\(\log\frac{1}{\eps}\)^n\sum_{q=1}^{\infty}\frac{c_q^2}{q!}\frac{2^n}{q^n}\,\,,
\end{flalign*}
where $K_{\mathbf{x}}(\mathbf{u})=K_{(x_1,\dots,x_n)}(u_1,\dots,u_n)=\frac{\mathbbm{1}_{[0,x_1]}(u_1)\dots\mathbbm{1}_{[0,x_n]}(u_n)}{\sqrt{x_1\cdots x_n}}$. We finally have to note that, 
$$
E\[f(N)^2\]<\infty\quad\Longrightarrow\quad\sum_{q=1}^{\infty}\frac{c_q^2}{q!}\frac{2^n}{q^n}<\infty \,\,.
$$
\end{oss}
Our result goes as follows.

\medskip

\begin{prop}\label{sheet}
Assume $E\abs{f(N)}^2<\infty$ and $E\abs{f''(N)}^4<\infty$, where $N\sim\mathcal{N}(0,E[\hat{F}_\eps^2])$, then we have that 
$$
d_{TV}\(\hat{F}_{\eps},N\)\leq \frac{C_n}{\(\log\frac{1}{\eps}\)^{n/2}},
$$
where $C_n$ is a constant that does not depend on $\eps$.
\end{prop}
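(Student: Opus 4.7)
The plan is to apply Theorem \ref{SOPIi} directly to $\hat{F}_\eps$, compute the Malliavin derivatives by stochastic Fubini, and reduce the resulting double integral to a product of one-dimensional integrals that are easily handled by switching to logarithmic variables.

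First, using stochastic Fubini (as in the derivation of \eqref{Mder+int}), the Malliavin derivatives write
\[
D\hat{F}_\eps(\mathbf u)=\frac{1}{(\log1/\eps)^{n/2}}\int_{[\eps,1]^n}f'\!\left(X(K_{\mathbf x})\right)K_{\mathbf x}(\mathbf u)\,d\nu_n(\mathbf x),
\]
\[
D^2\hat{F}_\eps(\mathbf u,\mathbf v)=\frac{1}{(\log1/\eps)^{n/2}}\int_{[\eps,1]^n}f''\!\left(X(K_{\mathbf x})\right)K_{\mathbf x}(\mathbf u)K_{\mathbf x}(\mathbf v)\,d\nu_n(\mathbf x),
\]
where $K_{\mathbf x}(\mathbf u)=\prod_{i=1}^n \mathbbm{1}_{[0,x_i]}(u_i)/\sqrt{x_1\cdots x_n}$ and $X(K_{\mathbf x})\sim\mathcal N(0,1)$ for each $\mathbf x$. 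A direct computation then gives
\[
(D^2\hat{F}_\eps\otimes_1 D^2\hat{F}_\eps)(\mathbf u,\mathbf v)=\frac{1}{(\log1/\eps)^{n}}\int_{[\eps,1]^{2n}}f''(X(K_{\mathbf x}))f''(X(K_{\mathbf y}))\,\langle K_{\mathbf x},K_{\mathbf y}\rangle\,K_{\mathbf x}(\mathbf u)K_{\mathbf y}(\mathbf v)\,d\nu_n(\mathbf x)d\nu_n(\mathbf y),
\]
with $\langle K_{\mathbf x},K_{\mathbf y}\rangle=\prod_{i=1}^n (x_i\wedge y_i)/\sqrt{x_iy_i}$.

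Next, I would apply Minkowski's integral inequality to move the $L^2(\Omega)$-norm inside the $d\nu_n$ integrals, and bound
\[
\left\{E\!\left[\bigl(f''(X(K_{\mathbf x}))f''(X(K_{\mathbf y}))\bigr)^2\right]\right\}^{1/2}\leq \sqrt{E[f''(N)^4]}
\]
by Cauchy--Schwarz, with an analogous bound for $\{E[(DF(\mathbf u)DF(\mathbf v))^2]\}^{1/2}$. Multiplying these two bounds, integrating over $\mathbf u,\mathbf v\in[0,1]^n$, and using Fubini, the $\mathbf u$- and $\mathbf v$-integrals produce exactly the two extra inner products $\langle K_{\mathbf x},K_{\mathbf x'}\rangle$ and $\langle K_{\mathbf y},K_{\mathbf y'}\rangle$, with $(\mathbf x,\mathbf y)$ coming from $D^2F\otimes_1 D^2F$ and $(\mathbf x',\mathbf y')$ from $DF\cdot DF$. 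The full bound factorises over the $n$ coordinates into $(\log1/\eps)^{-2n}\,I_\eps^n$, where
\[
I_\eps=\int_{[\eps,1]^4}\frac{(x\wedge y)(x\wedge x')(y\wedge y')}{\sqrt{xy}\sqrt{xx'}\sqrt{yy'}}\,d\nu(x)d\nu(y)d\nu(x')d\nu(y').
\]

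The key remaining step is to estimate $I_\eps=O(\log1/\eps)$. Substituting $x=e^{-a}$, etc., with $a,b,c,d\in[0,T]$, $T:=\log(1/\eps)$, the measure $d\nu$ becomes Lebesgue measure and each factor $(x\wedge y)/\sqrt{xy}$ becomes the bounded exponential kernel $e^{-|a-b|/2}$. Thus
\[
I_\eps=\int_{[0,T]^4}e^{-(|a-b|+|a-c|+|b-d|)/2}\,da\,db\,dc\,dd.
\]
Integrating out $c$ and $d$ against the integrable kernel $e^{-|\cdot|/2}$ gives a constant, leaving $\int_{[0,T]^2}e^{-|a-b|/2}da\,db=O(T)$, so $I_\eps^n=O((\log1/\eps)^n)$. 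Plugging into Theorem \ref{SOPIi}, the prefactor $c_{TV}=4/\sigma_\eps^2$ is bounded by Remark \ref{jeu} (since $\sigma_\eps^2$ converges to a positive constant), and the square root of $(\log1/\eps)^{-2n}\cdot(\log1/\eps)^n$ yields the rate $C_n(\log1/\eps)^{-n/2}$.

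The main obstacle I anticipate is bookkeeping: carefully tracking the powers of $(\log1/\eps)$ and verifying that the four-fold integral truly factorises coordinatewise after applying Minkowski and Fubini. The analytic estimate of $I_\eps$ itself is straightforward once the log-substitution is made, and the stochastic Fubini required to write $D^2\hat F_\eps$ as an integral is justified as in the proof of Proposition \ref{NLFIGP}.
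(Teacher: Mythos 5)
Your proof is correct, and it reaches the same bound as the paper by a genuinely cleaner route. The paper first writes out explicit pointwise estimates for $\{E[(D\hat F_\eps(\mathbf t)D\hat F_\eps(\mathbf s))^2]\}^{1/2}$ and $\{E[((D^2\hat F_\eps\otimes_1 D^2\hat F_\eps)(\mathbf t,\mathbf s))^2]\}^{1/2}$, carrying $\mathbf t,\mathbf s$ along, splits the domain into regions such as $\{x_i\le y_i\}$, and then integrates these explicit (and rather messy) expressions involving $\log(t_i\vee\eps)$ and $1/(s_i\vee\eps)$ at the very end. You instead use Minkowski plus Cauchy--Schwarz to freeze the nonlinearity at the constants $(E[f'(N)^4])^{1/2}$, $(E[f''(N)^4])^{1/2}$, then integrate over $(\mathbf u,\mathbf v)\in[0,1]^{2n}$ \emph{immediately}, which produces the two extra inner products $\langle K_{\mathbf x},K_{\mathbf x'}\rangle$ and $\langle K_{\mathbf y},K_{\mathbf y'}\rangle$ and lets the whole thing factorize coordinatewise into $(\log 1/\eps)^{-2n}\,I_\eps^n$. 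The log-substitution $x=e^{-a}$ then converts $d\nu$ into Lebesgue measure on $[0,T]$ with $T=\log(1/\eps)$ and turns $(x\wedge y)/\sqrt{xy}$ into the stationary, integrable kernel $e^{-|a-b|/2}$, so $I_\eps=O(T)$ follows by integrating out $c,d$ uniformly and observing $\int_{[0,T]^2}e^{-|a-b|/2}\,da\,db=O(T)$. This not only avoids the case-splitting bookkeeping of the paper's explicit integration but also makes transparent why the rate is $(\log 1/\eps)^{-n/2}$: after the substitution, each coordinate of the Brownian sheet problem becomes exactly the stationary Ornstein--Uhlenbeck-type setting of Proposition \ref{NLFIGP} with $\mu(A)=T$, so the rate $T^{-1/2}$ per coordinate is predicted by that proposition. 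One small point worth recording, which the paper handles in the setup of Section 4.1, is that $E|f'(N)|^4<\infty$ (needed for your bound on $\{E[(D\hat F_\eps(\mathbf u)D\hat F_\eps(\mathbf v))^2]\}^{1/2}$) follows from the stated hypothesis $E|f''(N)|^4<\infty$ via the Gaussian Poincar\'e inequality.
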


\begin{oss}\label{exact}
In \cite[Proposition 5.2]{NP:09c}, the authors obtain an exact rate of convergence in Kolmogorov distance only in the case $f(x)=x^2$, that is
\begin{equation}\label{exact_sheet}
\frac{c_n}{\(\log\frac{1}{\eps}\)^{n/2}}\leq d_{Kol}\(\hat{F}_{\eps},N\)\leq \frac{C_n}{\(\log\frac{1}{\eps}\)^{n/2}}\,,
\end{equation}
where, again, $c_n$ and $C_n$ are constants that do not depend on $\eps$. This proves that, since the exact rate in \eqref{exact_sheet} does not depend on the form of $f$, also our generalisation of Proposition \ref{sheet} attains an optimal rate of convergence for $\hat{F}_{\eps}$.
\end{oss}

\proof
We can write $\hat{F}_{\eps}$ as follows
\begin{eqnarray*}
\hat{F}_{\eps}&=&\frac{1}{\(\log\frac{1}{\eps}\)^{n/2}}\int_{[\eps,1]^n}\{f\(\frac{W(x_1,\dots,x_n)}{\sqrt{x_1\cdots x_n}}\)-E\[f\(N\)\]\}d\nu_n(x_1,\dots,x_n)\\
&=&\frac{1}{\(\log\frac{1}{\eps}\)^{n/2}}\int_{[\eps,1]^n}\{f\(X\(K_{\mathbf{x}}\)\)-E\[f\(N\)\]\}d\nu_n(\mathbf{x}_n)\,.
\end{eqnarray*}
As a consequence, thanks to the stochastic Fubini theorem (see \cite{Ve:12}), we can compute
\begin{eqnarray}
D\hat{F}_{\eps}(\mathbf{t})&=&\frac{1}{\(\log\frac{1}{\eps}\)^{n/2}}\int_{[\eps,1]^n}f'\(X\(K_{\mathbf{x}}\)\)K_{\mathbf{x}}(\mathbf{t})d\nu_n(\mathbf{x}_n)\label{D}
\end{eqnarray}
and
\begin{eqnarray}
D^2\hat{F}_{\eps}(\mathbf{t},\mathbf{s})&=&\frac{1}{\(\log\frac{1}{\eps}\)^{n/2}}\int_{[\eps,1]^n}f''\(X\(K_{\mathbf{x}}\)\)\,K_{\mathbf{x}}(\mathbf{t}) \, K_{\mathbf{x}}(\mathbf{s}) \, d\nu_n(\mathbf{x}_n)\,.\label{D2}
\end{eqnarray}
So we have that
\begin{flalign*}
&E\[\(D\hat{F}_{\eps}(\mathbf{t})D\hat{F}_{\eps}(\mathbf{s})\)^2\]=\\
&=\frac{1}{\(\log\frac{1}{\eps}\)^{2n}}\, E\[\(\int_{[\eps,1]^{2n}}f'\(X\(K_{\mathbf{x}}\)\)f'\(X\(K_{\mathbf{y}}\)\)K_{\mathbf{x}}(\mathbf{t})K_{\mathbf{y}}(\mathbf{s})d\nu_n(\mathbf{x}_n)d\nu_n(\mathbf{y}_n)\)^2\]\\
&\leq \frac{1}{\(\log\frac{1}{\eps}\)^{2n}}E\abs{f'(N)}^4\prod_{i=1}^n\(\int_{[\eps,1]^{2}}\frac{\mathbbm{1}_{[0,x_i]}(t_i)}{\sqrt{x_i}}\frac{\mathbbm{1}_{[0,w_i]}(t_i)}{\sqrt{w_i}}d\nu(x_i)d\nu(w_i)\)\times\\
&\qquad\qquad\qquad\qquad\qquad\qquad\times\(\int_{[\eps,1]^{2}}\frac{\mathbbm{1}_{[0,y_i]}(s_i)}{\sqrt{y_i}}\frac{\mathbbm{1}_{[0,z_i]}(s_i)}{\sqrt{z_i}}d\nu(y_i)d\nu(z_i)\)\\
&=\frac{1}{\(\log\frac{1}{\eps}\)^{2n}}E\abs{f'(N)}^4\prod_{i=1}^n\(\int_{t_i\vee\eps}^1\,\frac{1}{x_i^{3/2}}\,dx_i\)^2\(\int_{s_i\vee\eps}^1\,\frac{1}{y_i^{3/2}}\,dy_i\)^2\\
&=\frac{1}{\(\log\frac{1}{\eps}\)^{2n}}E\abs{f'(N)}^4\(2^{4n}\prod_{i=1}^n\(\frac{1}{\sqrt{t_i\vee\eps}}-1\)^2\(\frac{1}{\sqrt{s_i\vee\eps}}-1\)^2\).
\end{flalign*}
Now, without loss of generality, consider the part of the space $[0,1]^n$ in which $x_i\leq y_i$, for every $i$:
\begin{flalign*}
&\(D^2\hat{F}_{\eps}\otimes_1D^2\hat{F}_{\eps}\)(\mathbf{t},\mathbf{s})=\frac{1}{\(\log\frac{1}{\eps}\)^{n}}\int_{[\eps,1]^{2n}}f''\(X\(K_{\mathbf{x}}\)\)f''\(X\(K_{\mathbf{y}}\)\)\times\\
&\qquad\qquad\times\(\int_{[0,1]^n}K_{\mathbf{x}}(\mathbf{u})K_{\mathbf{y}}(\mathbf{u})du_1\cdots du_n\)\,K_{\mathbf{x}}(\mathbf{t})K_{\mathbf{y}}(\mathbf{s})\,d\nu_n(\mathbf{x}_n)d\nu_n(\mathbf{y}_n)\\
&=\frac{1}{\(\log\frac{1}{\eps}\)^{n}}\int_{[\eps,1]^{2n}}f''\(X\(K_{\mathbf{x}}\)\)f''\(X\(K_{\mathbf{y}}\)\)E[X\(K_{\mathbf{x}}\)X\(K_{\mathbf{y}}\)]K_{\mathbf{x}}(\mathbf{t}) \,K_{\mathbf{y}}(\mathbf{s})\,d\nu_n(\mathbf{x}_n)d\nu_n(\mathbf{y}_n)\\
&=\frac{1}{\(\log\frac{1}{\eps}\)^{n}}\int_{[\eps,1]^{2n}}f''\(X\(K_{\mathbf{x}}\)\)f''\(X\(K_{\mathbf{y}}\)\)\,\prod_{i=1}^n \,\frac{\mathbbm{1}_{[0,x_i]}(t_i)}{x_i}\frac{\mathbbm{1}_{[0,y_i]}(s_i)}{y_i^2}\,dx_idy_i
\end{flalign*}
so that (note that since we are treating the case $x_i\leq y_i$, for every $i$, this implies that $t_i\leq s_i$, for every $i$)
\begin{flalign*}
&E\[\(D^2\hat{F}_{\eps}\otimes_1D^2\hat{F}_{\eps}\)(\mathbf{t},\mathbf{s})^2\]=\\
&=\frac{1}{\(\log\frac{1}{\eps}\)^{2n}}E\[\int_{[\eps,1]^{4n}}f''\(X\(K_{\mathbf{x}}\)\)f''\(X\(K_{\mathbf{y}}\)\)f''\(X\(K_{\mathbf{w}}\)\)f''\(X\(K_{\mathbf{z}}\)\)\times\right.\\
&\qquad\qquad\qquad\qquad\times \,\prod_{i=1}^n \,\frac{\mathbbm{1}_{[0,x_i]}(t_i)}{x_i}\frac{\mathbbm{1}_{[0,y_i]}(s_i)}{y_i^2}\,dx_idy_i \prod_{i=1}^n \,\frac{\mathbbm{1}_{[0,w_i]}(t_i)}{w_i}\frac{\mathbbm{1}_{[0,z_i]}(s_i)}{z_i^2}\,dw_idz_i\bigg]\\
&\leq\frac{1}{\(\log\frac{1}{\eps}\)^{2n}}E\abs{f''(N)}^4\,\prod_{i=1}^n\(\int_{[\eps,1]^{2}}\,\frac{\mathbbm{1}_{[0,x_i]}(t_i)}{x_i}\frac{\mathbbm{1}_{[0,y_i]}(s_i)}{y^2}\,dx_idy_i\)^{2}\\
&=\frac{1}{\(\log\frac{1}{\eps}\)^{2n}}E\abs{f''(N)}^4\prod_{i=1}^n\(2\int_{s_i\vee \eps}^1\,\frac{dy_i}{y_i^2} \, \int_{t_i\vee \eps}^{y_i} \,\frac{dx_i}{x_i}\mathbbm{1}_{\{t_i\leq s_i\}}+2\int_{t_i\vee \eps}^1\,\frac{dy_i}{y_i^2} \, \int_{t_i\vee \eps}^{y_i} \,\frac{dx_i}{x_i}\mathbbm{1}_{\{s_i\leq t_i\}}\)^{2}\\
&=\frac{1}{\(\log\frac{1}{\eps}\)^{2n}}E\abs{f''(N)}^42^{4n}\prod_{i=1}^n\(\[-\frac{\log x_i}{x_i}-\frac{1}{x_i}-\frac{\log (t_i\vee \eps)}{x_i}\]_{s_i\vee \eps}^1\)^{2}\\
&=\frac{E\abs{f''(N)}^42^{4n}}{\(\log\frac{1}{\eps}\)^{2n}}\prod_{i=1}^n\(\[-\log (t_i\vee \eps)+\frac{\log (s_i\vee \eps)}{s_i\vee \eps}+\frac{1}{s_i\vee \eps}+\frac{\log (t_i\vee \eps)}{s_i\vee \eps}-1\]\)^{2}\,.
\end{flalign*}
Let $Z$ be a Gaussian random variable with same mean and variance of $\hat{F}_{\eps}$, then, plugging into our bound the previous quantities, we have that 
\begin{flalign*}
d\(\hat{F}_{\eps},Z\)^2 &\leq\,\frac{16}{\(\Var\hat{F}_{\eps}\)^2}\frac{1}{\(\log\frac{1}{\eps}\)^{2n}}E\abs{f'(N)}^2E\abs{f''(N)}^22^{4n}\\
&\qquad\times\int_{[0,1]^{2n}}\,dt_1\cdots dt_n\,ds_1\cdots ds_n\prod_{i=1}^n\(\frac{1}{\sqrt{t_i\vee\eps}}-1\)\(\frac{1}{\sqrt{s_i\vee\eps}}-1\)\\
&\qquad\times \(\[-\log (t_i\vee \eps)+\frac{\log (s_i\vee \eps)}{s_i\vee \eps}+\frac{1}{s_i\vee \eps}+\frac{\log (t_i\vee \eps)}{s_i\vee \eps}-1\]\)\\ 
& \stackrel{\eps\to0}{\approx} \frac{16 \,\(\sum_{q=1}^{\infty}\frac{c_q^2}{q!}\frac{2}{q}\)^{-2}\, 2^{-2n}}{\(\log\frac{1}{\eps}\)^{2n}}E\abs{f'(N)}^2E\abs{f''(N)}^22^{4n}\\
&\qquad\times \(\int_{\eps}^1\,\frac{ds}{\sqrt{s}}\int_{\eps}^s\,\frac{dt}{\sqrt{t}}\(-\log (t)+\frac{\log (s)}{s}+\frac{1}{s}+\frac{\log (t)}{s}\)\)^n\\
&\stackrel{\eps\to0}{\approx} \frac{16 \, c}{\(\log\frac{1}{\eps}\)^{2n}}E\abs{f'(N)}^2E\abs{f''(N)}^22^{2n}\\
&\quad\times \(\int_{\eps}^1\,\frac{ds}{\sqrt{s}}\(-2\sqrt{s}(\log (s)-2)+2\sqrt{s}\frac{\log (s)}{s}+\frac{2\sqrt{s}}{s}+\frac{2\sqrt{s}(\log (s)-2)}{s}\)\)^n\\
&\stackrel{\eps\to0}{\approx} \frac{16\, c}{\(\log\frac{1}{\eps}\)^{2n}}E\abs{f'(N)}^2E\abs{f''(N)}^22^{2n}\,\(\int_{\eps}^1\,\frac{2}{s}\,ds\)^n\\
&= \frac{16\,c}{\(\log\frac{1}{\eps}\)^{n}}E\abs{f'(N)}^2E\abs{f''(N)}^22^{3n}\,,
\end{flalign*}
where $1/c=\(\sum_{q=1}^{\infty}\frac{c_q^2}{q!}\frac{2^n}{q^n}\)^2<\infty$ (see Remark \ref{jeu}).

The other cases, i.e. the other parts of the space $[0,1]^n$, can be treated analogously since all the functions considered here are symmetric.
\endproof


\section{An application to random matrices: Traces of Wigner matrices}\label{Wigner}

Let $X=\(X_{ij}\)_{1\leq i\leq j\leq n}$ be a vector with values in $\mathbb{R}^{\frac{n(n+1)}{2}}$ and $Y\(X\)=\(Y_{ij}(X)\)_{1\leq i, j\leq n}$ be the $n\times n$ matrix whose $ij$-th entry is $X_{ij}$ if $i\leq j$ and $X_{ji}$ if $i>j$. The random matrix  
$$
A\(X\)=\frac{1}{\sqrt{n}}Y\(X\),\quad n\geq1,
$$
is called {\it Wigner matrix} of dimension $n\times n$. In the famous paper \cite{SS:98}, the authors described the limiting behaviour of $\Tr\(A\(X\)^{p_n}\)$ when $p_n=o(n^{2/3})$, obtaining a qualitative (i.e. non-quantitative) CLT for $\Tr\(A\(X\)^{p_n}\)$, when the $X_{ij}$'s are independent centred symmetric random variables such that $E\[X_{ij}^2\]=\frac{1}{4}$ and their higher moments do not grow faster than the moments of a Gaussian random variable. Their main result even implied CLTs for more general class of linear statistics of the eigenvalues of $A(X)$ as well as almost sure convergence of the greatest eigenvalue of $A(X)$ to $1$ (interested readers can see \cite[Corollary 1, 2]{SS:98}). Later on, Chatterjee \cite{Ch:09} applied his formulation of second order Poincar\'e inequality (see Theorem \ref{ch_thm}) to obtain a QCLT in the case when the $X_{ij}$'s are both independent centred symmetric random variables such that $c\leq E\[X_{ij}^2\]\leq C$ and twice differentiable functions, with bounded first and second derivatives, of a standard Gaussian random variable; however assuming $p_n=o(\log n)$. In this section, we consider the case when $X\sim \frac{1}{2}\times\mathcal{N}\(0,I_{d\times d}\)$, $d=\frac{n(n+1)}{2}$ and we obtain a quantitative CLT in total variation distance for $\(\Tr(A^{p_n})-E\[\Tr(A^{p_n})\]\)/\sqrt{\Var\Tr A^{p_n}}$ when $p_n=o(n^{4/15})$. We stress that one could use our form of second order Poincar\'e inequalities to obtain a QCLT in the more general case considered in \cite{Ch:09} but instead allowing $p_n=o(n^{4/15})$.

\subsection{Main result}
From now on, for sake of notational simplicity, we will write $p$ intended as $p_n$. Our main result is the following.

\begin{thm}\label{WignerTHM}
If $p=o(n^{4/15})$, then $F_n:=\frac{\Tr A(X)^p-E[\Tr A(X)^p]}{\sqrt{\Var \Tr A(X)^p}}\to N\sim\NN\(0,1\)$ in distribution as $n\to \infty$ and there exists a universal constant $C<\infty$ such that
$$
d_{TV}(F_n,N)\leq \,C\,\(\frac{e^{3/4}}{(2\,\pi)^{3/8}} \,\frac{p^{7/8}}{n^{1/4}} + \frac{2\,e}{2^{1/8}\,\sqrt{\pi}} \,\frac{p^{15/8}}{\sqrt{\,n\,}}\)\,\,.
$$

Moreover, setting $p=O(n^{\alpha})$, we have that
\begin{equation}\label{cases-Wig}
d_{TV}(F_n,N)= \begin{cases} 
O\(\frac{p^{15/8}}{\sqrt{\,n\,}}\) &\mbox{if } \quad \frac14 \leq \alpha < \frac 4{15}\\[10pt]
O\(\frac{p^{7/8}}{n^{1/4}}\) &\mbox{if } \quad 0 < \alpha < \frac14 
\end{cases}
\end{equation}
\end{thm}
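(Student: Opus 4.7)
The strategy is to apply the finite-dimensional bound \eqref{SOPfTV} of Theorem \ref{SOPIf} directly to $F_n$, viewed as a $C^\infty$ function of the $d=n(n+1)/2$ jointly Gaussian entries $\{X_{ij}\}_{i\leq j}$. Since $F_n$ has unit variance by construction, it remains to estimate the double sum under the square root of \eqref{SOPfTV}; the variance $\Var \Tr A^p$ from the normalization, which decays with $p$, gets absorbed into the prefactor, and this is where the growth in $p$ first enters the bound.

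The first step is to compute explicitly the gradient and Hessian of $\Tr A^p$ in the entries $X_{kl}$. Exploiting the symmetry $Y_{kl}=Y_{lk}$ and the cyclicity of the trace, one obtains
$$
\nabla_{kl}\, \Tr A^p \;=\; \frac{p\,\xi_{kl}}{\sqrt{n}}\,(A^{p-1})_{kl},
\qquad
\xi_{kl}:=1+\mathbbm{1}_{\{k\neq l\}},
$$
while $\nabla^2_{(k,l),(m,r)} \Tr A^p$ is a convolution-type sum, indexed by $s=0,\dots,p-2$, of products of entries of $A^s$ and $A^{p-2-s}$ coming from repeatedly differentiating $A^{p-1}$. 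These closed forms allow one to re-express the double sums appearing in \eqref{SOPfTV} as traces of even powers of $A$: for the gradient, $\sum_{k,l}\xi_{kl}^2 (A^{p-1})_{kl}^2$ collapses to a multiple of $\Tr A^{2p-2}$ (up to diagonal corrections), and the analogous --- but more elaborate --- collapse for the Hessian contraction yields, after Cauchy--Schwarz on the convolution index $s$, bounds in terms of $\Tr A^{4p-4}$ multiplied by $O(p^2)$ combinatorial weights.

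The second step is to control the resulting expectations using classical moment estimates for Gaussian Wigner matrices. The normalization $E[X_{ij}^2]=1/4$ places the limiting semicircle on $[-1,1]$, giving the sharp Wigner-type bound $E[\Tr A^{2q}] \sim n/(\sqrt{\pi}\,q^{3/2})$, with analogous control on $E[(\Tr A^{2q})^2]$ in the Sinai--Soshnikov range. Combined with the asymptotic lower bound $\Var \Tr A^p \geq c\,n/(\sqrt{\pi}\,p^{3/2})$ from the same combinatorial analysis, plugging these estimates into Theorem \ref{SOPIf} produces two competing terms under the square root, one of order $p^{7/4}/\sqrt{n}$ and the other of order $p^{15/4}/n$. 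Taking the square root, these match the two constants displayed in the statement and yield the case distinction in \eqref{cases-Wig}; both terms vanish precisely as soon as $p=o(n^{4/15})$, which is exactly the range claimed.

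The main obstacle I expect is the Hessian contribution. Because $\nabla^2 \Tr A^p$ is a convolution sum of $p-1$ terms, squaring it and performing the $\otimes_1$-type inner contraction required by the proof of Theorem \ref{SOPIi} produces a quadruple sum over index pairs that must be re-assembled into a single trace of an even power of $A$. Careful bookkeeping of the combinatorial prefactors coming from this convolution, together with sharp Wigner moment bounds in the regime $p\to \infty$, is what produces the exponents $7/8$ and $15/8$ and, crucially, extends Chatterjee's $p=o(\log n)$ regime --- where bounding $\| \nabla^2 F\|_{op}$ via $\|\nabla^2 F\|_{HS}$ was unavoidable --- all the way to $p=o(n^{4/15})$, exploiting precisely the directly-computable form of \eqref{SOPTV} highlighted in the preceding sections.
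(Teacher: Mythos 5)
Your high-level strategy matches the paper's: apply the finite-dimensional bound of Theorem \ref{SOPIf}, compute $\nabla\operatorname{Tr}A^p$ and $\nabla^2\operatorname{Tr}A^p$ explicitly using the matrix derivative identities, and estimate the resulting moments. However, the proposal contains a factual error and a strategic gap that together mean the stated argument would not reproduce the theorem.

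First, the variance claim is wrong. You assert ``the asymptotic lower bound $\Var\operatorname{Tr}A^p\geq c\,n/(\sqrt{\pi}\,p^{3/2})$'' and that the variance ``decays with $p$''. In this normalization ($A=Y(X)/\sqrt{n}$ with $E[X_{ij}^2]=1/4$, semicircle on $[-1,1]$), one has $\Var\operatorname{Tr}A^p\to 1/\pi$, a constant independent of $n$ and $p$ (this is the paper's Theorem A.2 in the appendix). You appear to have confused the variance with the mean, since indeed $E[\operatorname{Tr}A^{2q}]\sim n/(\sqrt{\pi}q^{3/2})$. Feeding a spurious $n/p^{3/2}$ growth into the $1/\sigma^4$ prefactor of \eqref{SOPfTV} would shift every power of $n$ and $p$ in the final estimate, and the exponents $7/8$ and $15/8$ would not come out.

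Second, and more fundamentally, the plan to ``re-assemble into a single trace of an even power of $A$'' and then apply Cauchy--Schwarz on the convolution index $s$ diverges from what actually makes the argument work. The inner contraction $\sum_{j,h}\nabla^2_{ik,jh}g\,\nabla^2_{lm,jh}g$ does \emph{not} produce a trace: performing the sum over $(j,h)$ exactly (no inequality) collapses the four-fold product into
\[
B_{iklm}=\frac{16\,p^2}{n^2}\sum_{Q_1=0}^{2p-4}(Q_1+1)\,(A^{Q_1})_{km}\,(A^{2p-4-Q_1})_{il},
\]
an object that still carries the free indices $i,k,l,m$, and whose second moment must then be estimated \emph{entry-wise} (Propositions \ref{A1}--\ref{A2}), distinguishing the cases $i=l$, $k=m$, $i=k$, $l=m$, etc. The sum $\sum_{i,k,l,m}\{E[B_{iklm}^2]\}^{1/2}\{E[(\nabla_{ik}g\,\nabla_{lm}g)^2]\}^{1/2}$ takes square roots \emph{before} summing, so it cannot be converted into a product of $E[\operatorname{Tr}A^{2q}]$'s and $E[(\operatorname{Tr}A^{2q})^2]$'s without first applying Cauchy--Schwarz on the outer sum and then checking that this extra step is sharp here (which the proposal does not address). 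Worse, applying Cauchy--Schwarz on the convolution index $s$ before the $(j,h)$-summation throws away exactly the cancellation that produces the clean $(Q_1+1)$-weighted single sum above, costing an extra factor of $p$, which would strictly worsen the threshold below $o(n^{4/15})$. The entry-wise Gaussian moment estimates of Propositions \ref{A1} and \ref{A2} are the technical heart of the proof, and your proposal substitutes trace-moment heuristics for them without closing the gap.

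Given these two issues, the proposal as written would neither recover the correct constants nor the threshold $p=o(n^{4/15})$, even though the opening moves coincide with the paper's.
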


\begin{oss}
Our proof shows that there exists a numerical sequence $\eta_n\in(0,\infty)$ such that for every $n$
$$
d_{TV}(F_n,N)\leq \,\eta_n\,\(\frac{e^{3/4}}{(2\,\pi)^{3/8}} \,\frac{p^{7/8}}{n^{1/4}} + \frac{2\,e}{2^{1/8}\,\sqrt{\pi}} \,\frac{p^{15/8}}{\sqrt{\,n\,}}\)\,\,,
$$
and $\eta_n\to4\pi$ as $n\to\infty$.
\end{oss}

\begin{oss}\label{comparison}
Assumptions of Theorem \ref{WignerTHM} can be seen as a special instance of the model considered in \cite{Ch:09}, namely here we take $c=C=\frac{1}{4}$ and we take the $X_{ij}$'s to be Gaussian themselves. In such case, the findings from \cite{Ch:09} would anyway led to a QCLT only when $p=o(\log n)$. In this sense, our result can be seen as an improvement in terms of speed of $p$.
However, our result cannot achieve the level of generality of \cite{SS:98,SS:99}, where not only a qualitative CLT is reached for $p=o(n^{2/3})$ but the authors do not assume any Gaussianity (not even subordinated, as in \cite{Ch:09}). In general, in order to apply second order Poincar\'e inequalities, some subordinated Gaussianity is needed.
\end{oss}

\subsection{First computations and sketch of the proof}

Note that the following relations hold (see Lemma 5.4 in \cite{Ch:09}):
\begin{equation*}
\frac{\partial}{\partial a_{ij}}\operatorname{Tr}\(A^p\)=p\(A^{p-1}\)_{ji}  
\end{equation*}
and
\begin{flalign*}
\frac{\partial^2}{\partial a_{ij}\partial a_{rs}}\operatorname{Tr}(A^p)&=p\sum_{q=0}^{p-2}\operatorname{Tr}\(\frac{\partial A}{\partial a_{ij}}A^q\frac{\partial A}{\partial a_{rs}}A^{p-2-q}\)&\\
&=p\sum_{q=0}^{p-2}\operatorname{Tr}\(E_{ij}A^qE_{rs}A^{p-2-q}\)=p\sum_{q=0}^{p-2}\(A^q\)_{jr}\(A^{p-2-q}\)_{is},
\end{flalign*}
where $E_{ij}$ is the $n\times n$ matrix whose entries are all zero except for the $ij$-th.
Now, note that we can write $X=\frac{1}{2}\,Z$, where $Z\sim\NN\(0,I_{d\times d}\)$. Then, $g(x)=g(\frac{z}{2})=f(z)$, and we have
\begin{flalign*}
\frac{\partial f}{\partial z_{kl}}(z)&=\sum_{i,j=1}^n\frac{\partial}{\partial a_{ij}}\operatorname{Tr}(A^p)\frac{\partial a_{ij}}{\partial z_{kl}}(z)\\
&=\sum_{i,j=1}^n p\(A^{p-1}\)_{ji}\frac{1}{2\,\sqrt{n}}\(\mathbbm{1}_{\{(k,l)=(i,j)\}}+\mathbbm{1}_{\{(k,l)=(j,i)\}}\mathbbm{1}_{\{k\neq l\}}\)\\
&=\frac{p}{2\,\sqrt{n}}\(A^{p-1}\)_{kl}+\frac{p}{2\,\sqrt{n}}\(A^{p-1}\)_{kl}\mathbbm{1}_{\{k\neq l\}}
\end{flalign*}
and
\begin{flalign*}
&\frac{\partial^2 f}{\partial z_{kl}\partial z_{hm}}(z)=\sum_{i,j=1}^n\frac{\partial}{\partial a_{ij}}\operatorname{Tr}(A^p)\frac{\partial^2 a_{ij}}{\partial x_{kl}\partial x_{hm}}+\sum_{i,j,r,s=1}^n\frac{\partial^2}{\partial a_{ij}\partial a_{rs}}\operatorname{Tr}(A^p)\frac{\partial a_{ij}}{\partial x_{kl}}\frac{\partial a_{rs}}{\partial x_{hm}}\\
&=\sum_{i,j,r,s=1}^np\sum_{q=0}^{p-2}\{\(A^q\)_{jr}\(A^{p-2-q}\)_{is}\}\frac{\partial a_{ij}}{\partial x_{kl}}\frac{\partial a_{rs}}{\partial x_{hm}}\\
&=\sum_{i,j,r,s=1}^n p\sum_{q=0}^{p-2}\(A^q\)_{jr}\(A^{p-2-q}\)_{is}\frac{1}{4\,n}\(\mathbbm{1}_{\{(k,l)=(i,j)\}}+\mathbbm{1}_{\{(k,l)=(j,i)\}}\mathbbm{1}_{\{k\neq l\}}\)\\
&\qquad\qquad\qquad\qquad\qquad\qquad\qquad\qquad\(\mathbbm{1}_{\{(h,m)=(r,s)\}}+\mathbbm{1}_{\{(h,m)=(s,r)\}}\mathbbm{1}_{\{h\neq m\}}\)\\
&=\frac{p}{4\,n}\sum_{q=0}^{p-2}\{\(A^q\)_{lh}\(A^{p-2-q}\)_{mk}+\(A^q\)_{lm}\(A^{p-2-q}\)_{hk}\mathbbm{1}_{\{h\neq m\}}+\qquad\qquad\right.\\
&\qquad\qquad\left.+\(A^q\)_{kh}\(A^{p-2-q}\)_{ml}\mathbbm{1}_{\{k\neq l\}}+\(A^q\)_{km}\(A^{p-2-q}\)_{hl}\mathbbm{1}_{\{k\neq l\}}\mathbbm{1}_{\{h\neq m\}}\}
\end{flalign*}
Plugging these relations into \eqref{SOPfTV} we deduce that
\begin{flalign}\label{generale}
&d_{TV}(F_n,N)^2=d_{TV}\(G_n-E[G_n]\,,\,\NN\(0,\Var G_n\)\)^2\nonumber\\
&\leq\frac{16}{\(\Var G_n\)^2}\sum_{i,k,l,m=1}^n \{E\[\(\sum_{j,h=1}^n\nabla^2_{ik,jh}g\nabla^2_{lm,jh}g\)^2\]\}^{1/2}\{E\[\(\nabla_{ik}g\nabla_{lm}g\)^2\]\}^{1/2}\\
&=\frac{1}{\(\Var G_n\)^2}\sum_{i,k,l,m=1}^n\{ E\[ \vast( \sum_{j,h=1}^n\Bigg(\frac{p}{n}\sum_{q=0}^{p-2}\{\(A^q\)_{kj}\(A^{p-2-q}\)_{hi}+\(A^q\)_{kh}\(A^{p-2-q}\)_{ji}\mathbbm{1}_{\{j\neq h\}}+\qquad\qquad\right.\right.\right.\\
&\qquad\qquad\left.+\(A^q\)_{ij}\(A^{p-2-q}\)_{hk}\mathbbm{1}_{\{k\neq i\}}+\(A^q\)_{ih}\(A^{p-2-q}\)_{jk}\mathbbm{1}_{\{k\neq i\}}\mathbbm{1}_{\{j\neq h\}}\}\Bigg)\times\nonumber\\
&\qquad\qquad\qquad\times\,\Bigg(\frac{p}{n}\sum_{q=0}^{p-2}\{\(A^q\)_{mj}\(A^{p-2-q}\)_{hl}+\(A^q\)_{mh}\(A^{p-2-q}\)_{jl}\1_{\{j\neq h\}}+\right.\nonumber\\
&\qquad\qquad\qquad\left.\left.\left.+\(A^q\)_{lj}\(A^{p-2-q}\)_{hm}\1_{\{l\neq m\}}+\(A^q\)_{lh}\(A^{p-2-q}\)_{jm}\1_{\{j\neq h\}}\1_{\{l\neq m\}}\}\Bigg)\vast)^2\,\]\}^{1/2}\times\nonumber\\
&\qquad\qquad\times \{E\[\(\frac{p}{2\,\sqrt{n}}\(A^{p-1}\)_{ik}+\frac{p}{2\,\sqrt{n}}\(A^{p-1}\)_{ik}\1_{\{k\neq i\}}\)^2\,\times\right.\right.\notag\\
&\left.\left.\qquad\qquad\qquad\qquad\qquad\times \(\frac{p}{2\,\sqrt{n}}\(A^{p-1}\)_{lm}+\frac{p}{2\,\sqrt{n}}\(A^{p-1}\)_{lm}\1_{\{l\neq m\}}\)^2\]\}^{1/2}\,.
\end{flalign}
Define
\begin{flalign*}
B_{iklm}&:=\sum_{j,h=1}^n\Bigg(\frac{p}{n}\sum_{q=0}^{p-2}\{\(A^q\)_{kj}\(A^{p-2-q}\)_{hi}+\(A^q\)_{kh}\(A^{p-2-q}\)_{ji}\mathbbm{1}_{\{j\neq h\}}+\qquad\qquad\right.\\
&\qquad\qquad\left.+\(A^q\)_{ij}\(A^{p-2-q}\)_{hk}\mathbbm{1}_{\{k\neq i\}}+\(A^q\)_{ih}\(A^{p-2-q}\)_{jk}\mathbbm{1}_{\{k\neq i\}}\mathbbm{1}_{\{j\neq h\}}\}\Bigg)\times\\
&\qquad\qquad\qquad\times\,\Bigg(\frac{p}{n}\sum_{q=0}^{p-2}\{\(A^q\)_{mj}\(A^{p-2-q}\)_{hl}+\(A^q\)_{mh}\(A^{p-2-q}\)_{jl}\1_{\{j\neq h\}}+\right.\\
&\qquad\qquad\qquad\left.+\(A^q\)_{lj}\(A^{p-2-q}\)_{hm}\1_{\{l\neq m\}}+\(A^q\)_{lh}\(A^{p-2-q}\)_{jm}\1_{\{j\neq h\}}\1_{\{l\neq m\}}\}\Bigg)\,.
\end{flalign*}

Now, without losing any generality (see Remark \ref{wlg}), we can assume that $i\neq k$, $l\neq m$ and $j\neq h$ and consequently $B_{iklm}$ becomes
\begin{flalign*}
B_{iklm}&=\sum_{j,h=1}^n\(\frac{p}{n}\sum_{q=0}^{p-2}\{\(A^q\)_{kj}\(A^{p-2-q}\)_{hi}+\(A^q\)_{kh}\(A^{p-2-q}\)_{ji}+\right.\right.\\
&\qquad\qquad\qquad\left.+\(A^q\)_{ij}\(A^{p-2-q}\)_{hk}+\(A^q\)_{ih}\(A^{p-2-q}\)_{jk}\}\Bigg)\times\\
&\times\,\(\frac{p}{n}\sum_{q=0}^{p-2}\{\(A^q\)_{mj}\(A^{p-2-q}\)_{hl}+\(A^q\)_{mh}\(A^{p-2-q}\)_{jl}+\right.\right.\\
&\qquad\qquad\qquad\left.+\(A^q\)_{lj}\(A^{p-2-q}\)_{hm}+\(A^q\)_{lh}\(A^{p-2-q}\)_{jm}\}\Bigg).
\end{flalign*}
We have that
\begin{flalign}\label{Biklm}
B_{iklm}&=\frac{2\,p^2}{n^2}\sum_{q_1,q_2=0}^{p-2}\{\(A^{q_1+q_2}\)_{km}\(A^{2p-4-q_1-q_2}\)_{il}+\right.\qquad\qquad\qquad\qquad\qquad\qquad\qquad\qquad\notag\\
&+\(A^{q_1+p-2-q_2}\)_{kl}\(A^{q_2+p-2-q_1}\)_{im}+\(A^{q_1+q_2}\)_{kl}\(A^{2p-4-q_1-q_2}\)_{im}+\notag\\
&+\(A^{q_1+p-2-q_2}\)_{km}\(A^{q_2+p-2-q_1}\)_{il}+\(A^{q_1+q_2}\)_{im}\(A^{2p-4-q_1-q_2}\)_{kl}+\notag\\
&+\(A^{q_1+p-2-q_2}\)_{il}\(A^{q_2+p-2-q_1}\)_{km}+\(A^{q_1+q_2}\)_{il}\(A^{2p-4-q_1-q_2}\)_{km}+\notag\\
&\qquad\qquad\qquad\qquad\qquad\qquad\qquad\qquad\left.+\(A^{q_1+p-2-q_2}\)_{im}\(A^{q_2+p-2-q_1}\)_{kl}\}\notag\\
&=\frac{16\,p^2}{n^2}\sum_{q_1,q_2=0}^{p-2}\(A^{q_1+q_2}\)_{km}\(A^{2p-4-q_1-q_2}\)_{il}\notag\\
&=\frac{16\,p^2}{n^2}\sum_{Q_1=0}^{2p-4}\(Q_1+1\)\(A^{Q_1}\)_{km}\(A^{2p-4-Q_1}\)_{il}\,,
\end{flalign}
where $Q_1=q_1+q_2$.
Hence
\begin{flalign*}
B_{iklm}^2&=\frac{2^8\,p^4}{n^4}\sum_{Q_1,Q_2=0}^{2p-4}\(Q_1+1\)\(Q_2+1\)\(A^{Q_1}\)_{km}\(A^{2p-4-Q_1}\)_{il}\(A^{Q_2}\)_{km}\(A^{2p-4-Q_2}\)_{il}\,.
\end{flalign*}
It follows that the first term of the product in \eqref{generale} has the form 
\begin{flalign}\label{defA1}
&\A_1(i,k,l,m):=E\[B_{iklm}^2\]\notag\\
&=\frac{2^8\,p^4}{n^4}\sum_{Q_1,Q_2=0}^{2p-4}\(Q_1+1\)\(Q_2+1\)E\bigg[\(A^{Q_1}\)_{km}\(A^{2p-4-Q_1}\)_{il}\(A^{Q_2}\)_{km}\(A^{2p-4-Q_2}\)_{il}\bigg]\,.
\end{flalign}
Hence, considering the fact that $\Var G_n\to\frac{1}{\pi}$ as $n\to\infty$ (see Theorem A.2 in the auxiliary file Appendix A: {\footnotesize\url{https://annavidotto.files.wordpress.com/2018/06/auxiliary_file-appendix.pdf}}), estimate \eqref{generale} becomes 
\begin{equation}\label{final}
d_{TV}(F_n,N)^2\leq\,\pi^2\, \sum_{i,k,l,m=1}^n\(\A_1(i,k,l,m)\)^{1/2}\(\A_2(i,k,l,m)\)^{1/2},
\end{equation}
where 
\begin{equation}\label{defA2}
\A_2(i,k,l,m):=\frac{p^4}{n^2}E\[\(A^{p-1}\)_{ik}\(A^{p-1}\)_{lm}\(A^{p-1}\)_{ik}\(A^{p-1}\)_{lm}\].
\end{equation}

\begin{oss}\label{wlg}
It is important to note that the other cases, for instance the case where $i=k$, $l\neq m$ and $j\neq h$, 
give exactly the same bound as in \eqref{final}, except for the fact that in \eqref{defA1} and \eqref{defA2} the constants that are independent of $n$ and $p$, $2^8$ and $1$ respectively, will change accordingly. Indeed, when $i=k$, $l\neq m$ and $j\neq h$, \eqref{defA1} and \eqref{defA2} become
\begin{flalign*}
&\A_1(i,k,l,m)\\
&=\frac{2^6\,p^4}{n^4}\sum_{Q_1,Q_2=0}^{2p-4}\(Q_1+1\)\(Q_2+1\)E\bigg[\(A^{Q_1}\)_{km}\(A^{2p-4-Q_1}\)_{il}\(A^{Q_2}\)_{km}\(A^{2p-4-Q_2}\)_{il}\bigg]
\end{flalign*}
and
\begin{equation*}
\A_2(i,k,l,m)=\frac{p^4}{4\,n^2}E\[\(A^{p-1}\)_{ik}\(A^{p-1}\)_{lm}\(A^{p-1}\)_{ik}\(A^{p-1}\)_{lm}\]
\end{equation*}
respectively. In general, for \eqref{defA1}, the constant will be a power between $1$ and $2^8$; while for \eqref{defA2}, it will be a fraction between $\frac{1}{2^4}$ and $1$. However, as the reader will see, the cases in which some index $i,k,l,m,j,h$ is assumed to be equal to another can not give the main contribution to the bound \eqref{final}. For this reason, we will keep the constants associated with the case $i\neq k$, $l\neq m$ and $j\neq h$, without affecting the forthcoming results.
\end{oss}

\begin{notation} 
Given $p=p_n$ such that $\lim_{n\to\infty}p_n=\infty$, and sequences \\$\{A(p,n),C(p,n):n\geq1\}$ such that $A(p,n)$ possibly depends on indices \\$i,k,l,m,Q_1,Q_2$, we will write 
$$
A(p,n)=o\(C(p,n)\) 
$$
to indicate the relation
$$
\frac{A(p,n)}{C(p,n)}\leq\eps_n,
$$
where $\eps_n \to 0$ as $n\to\infty$ and $\eps_n$ does not depend on $i,k,l,m,Q_1,Q_2$. 
\end{notation}

For the rest of the paper, assume that $p_n=o(n^{4/15})$.

The (quite technical) proofs of the forthcoming Propositions \ref{A1} and \ref{A2} are presented in detail in Appendix A, that the reader can find at the following link: \url{https://annavidotto.files.wordpress.com/2018/06/auxiliary_file-appendix.pdf}. 

\bigskip

\begin{prop}\label{A1}
For fixed $i,k,l,m$, we have that
\begin{flalign*}
&\A_1(i,k,l,m)\leq\frac{4^4p^4}{n^4}\sum_{Q_1,Q_2=0}^{2p-4}\,\(Q_1+1\)\(Q_2+1\)\times\\
&\times\{\[\frac{e}{2\,\sqrt{2\,\pi\,p^3}}\mathbbm{1}_{\{i=l\}}+\frac{e}{2\,n\,\sqrt{2\,\pi\,p^3}}\mathbbm{1}_{\{i\neq l\}}\]\mathbbm{1}_{\{Q_1=Q_2=0\}}+\right.\\
&\qquad\quad+2\,\[\frac{e^2}{\sqrt{2\,\pi^2\,p^3\, Q^3}}\mathbbm{1}_{\{i= l,k=m\}}+\frac{2\,e^2}{n\,\sqrt{2\,\pi^2\,p^3\, Q^3}}\mathbbm{1}_{\{i\neq l,k=m\}}+\right.\\
&\qquad\qquad\qquad\left.+\frac{e^2}{n^2\,\sqrt{2\,\pi^2\,p^3\, Q^3}}\mathbbm{1}_{\{i\neq l,k\neq m\}}\]\mathbbm{1}_{\{Q_1, Q_2 \text{ even, } Q_1\neq0\}}+\\
&\left.\qquad\qquad\qquad\qquad\qquad\qquad+\[\frac{e^2}{n^2\,\sqrt{2\,\pi^2\,p^3\, Q^3}}\]\mathbbm{1}_{\{Q_1, Q_2 \text{ odd}\}}\}\,(1+o(1))\,,
\end{flalign*}
where $2\,Q=Q_1+Q_2$ and $o(1)$ indicates a numerical sequence converging to zero, as $n\uparrow\infty$.
\end{prop}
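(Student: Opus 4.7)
The starting point is the combinatorial expansion
$$
\bigl(A^{q}\bigr)_{xy} \;=\; \frac{1}{n^{q/2}}\sum_{i_{0}=x,\,i_{q}=y} Y_{i_{0}i_{1}}Y_{i_{1}i_{2}}\cdots Y_{i_{q-1}i_{q}},
$$
so that, by Wick's/Isserlis' formula applied to the Gaussian family $\{Y_{ij}\}_{i\leq j}$ (with $E[Y_{ij}^{2}]=1/4$), the expectation appearing inside $\A_{1}(i,k,l,m)$ becomes a sum of products of $\frac14$'s over pair partitions $\pi$ of the $4p-8$ edges used by the four concatenated walks
$$
\gamma_{1}\colon k\to m\ (\text{length }Q_{1}),\ \gamma_{2}\colon i\to l\ (2p-4-Q_{1}),\ \gamma_{3}\colon k\to m\ (Q_{2}),\ \gamma_{4}\colon i\to l\ (2p-4-Q_{2}),
$$
each pair forcing two edges (as unordered vertex-pairs) to coincide. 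The plan is to control this sum with the Sinai--Soshnikov-type enumeration of non-crossing pairings, isolating the leading configurations according to how $\pi$ connects the four walks and then exploiting Stirling to convert Catalan numbers into the explicit constants $e,\pi$ appearing in the statement.

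The first step is to classify pairings $\pi$ by which walks they match. The dominant contribution comes from $\pi$ that splits as a non-crossing pairing of edges \emph{within} the concatenation $\gamma_{1}\cup\gamma_{3}$ together with one within $\gamma_{2}\cup\gamma_{4}$; this is non-zero only when each block has even total length, forcing $Q_{1}+Q_{2}$ to be even -- whence the case split ``both even" vs.\ ``both odd" and the singular role of $Q_{1}=Q_{2}=0$. For such a $\pi$ the contribution factorises, and standard walk-counting (planar loops contribute Catalan numbers, non-planar ones cost at least one factor of $1/n$) yields the two-point bounds
$$
E\bigl[(A^{a})_{xy}(A^{b})_{xy}\bigr] \;\leq\; \frac{C_{(a+b)/2}}{4^{(a+b)/2}}\,n^{-\,d(x,y)}\,(1+o(1)),\qquad a+b\ \text{even},
$$
with $d(x,y)=0$ if $x=y$ and $d(x,y)=1$ if $x\neq y$; this is where the indicators $\mathbbm 1_{\{i=l\}},\mathbbm 1_{\{k=m\}}$ and the $1/n,1/n^{2}$ prefactors originate. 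The four ``diagonal/off-diagonal'' configurations of $(i,l)$ and $(k,m)$ then give the four summands inside the brace of the statement.

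The second step is the analytic reduction: apply the Stirling estimate $C_{m}\leq \frac{4^{m}}{\sqrt{\pi m^{3}}}\,e^{1/(12m)}$ to each Catalan factor, and use the elementary bound $\bigl(1-\tfrac{Q}{p}\bigr)^{-3/2}\leq e^{1}$ (valid uniformly for $Q\leq p-2$ in our range) in order to replace a denominator $(2p-4-Q)^{3/2}$ by $(2p)^{3/2}=2\sqrt 2\,p^{3/2}$ up to a factor $e$; this is precisely what produces the constants $e/(2\sqrt{2\pi p^{3}})$ and $e^{2}/\sqrt{2\pi^{2}p^{3}Q^{3}}$. The case $Q_{1}=Q_{2}=0$ is the trivial specialisation where $(A^{0})_{km}=\delta_{km}$, so the bound reduces to an estimate of $E[((A^{2p-4})_{il})^{2}]$, which is handled by the same two-point estimate above. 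Finally, pairings $\pi$ that \emph{cross} between $\{\gamma_{1},\gamma_{3}\}$ and $\{\gamma_{2},\gamma_{4}\}$ must use at least one identification of an edge from one block with an edge from the other, a cost of at least $1/n$; absorbing these into the $(1+o(1))$ requires precisely the assumption $p=o(n^{4/15})$ (the balance between the number of ``defective'' pairings, which grows like $p^{\alpha}$, and the $n^{-\beta}$ gain).

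The principal obstacle is controlling the non-planar/cross-block pairings uniformly in $Q_{1},Q_{2}$ with the correct joint dependence on $p$ and $n$. Naive enumeration yields bounds that are off by factors $p^{c}$; the Sinai--Soshnikov bookkeeping (tracking ``instants of self-intersection'' along the four walks) is needed to show that the total defect cost is $(1+o(1))$ under $p=o(n^{4/15})$, so that only the leading planar contribution survives with the constants stated. Once this is established, collecting the pieces over the four index configurations $(i=l$ or not, $k=m$ or not) gives exactly the four-term expression in the brace, and multiplying by the prefactors already present in the definition of $\A_{1}$ yields the bound.
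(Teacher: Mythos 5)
Your plan identifies the right set of tools---Wick's formula to reduce $\A_1$ to a sum over pair partitions of the $4p-8$ edge-variables, Sinai--Soshnikov walk counting, Stirling asymptotics for Catalan numbers, and a perturbative treatment of non-planar and cross-block pairings---and the reduction to two-point estimates of the form $E[(A^a)_{xy}(A^b)_{xy}]\leq C_{(a+b)/2}\,4^{-(a+b)/2}\,n^{-d(x,y)}(1+o(1))$ is the right first step. But the entire quantitative content of the Proposition lies in precisely the portion you defer: you assert, without argument, that the aggregate contribution of non-planar pairings and of pairings that couple $\{\gamma_1,\gamma_3\}$ to $\{\gamma_2,\gamma_4\}$ is $(1+o(1))$, uniformly in $Q_1,Q_2$, under $p=o(n^{4/15})$; you acknowledge that naive counting is off by powers of $p$; and you leave the announced ``Sinai--Soshnikov bookkeeping'' of self-intersections entirely undone. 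The constants $e$, $e^2$, the $\sqrt{2}$, and the exact $1/\sqrt{p^3Q^3}$ dependence are the outputs of that missing analysis, so no piece of the stated inequality is actually established by what you wrote.

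A few of the details you do commit to would also not survive scrutiny. The inequality $(1-Q/p)^{-3/2}\leq e$ is false for $Q$ near $p$ (at $Q=p-2$ the left-hand side is of order $p^{3/2}$); replacing $(2p-4-Q)^{-3/2}$ by a bounded multiple of $p^{-3/2}$ requires either restricting to $Q\leq p-2$ and then invoking the symmetry $Q_j\leftrightarrow 2p-4-Q_j$ that exchanges the roles of $(k,m)$ and $(i,l)$, or a different elementary estimate; as stated your step does not hold uniformly in $Q$. Your parity discussion also does not actually explain the structure of the statement: when $Q_1,Q_2$ are both odd, every planar (tree-type) pairing inside $\gamma_1\cup\gamma_3$ forces $k$ and $m$ to be at odd graph distance in the resulting tree, hence $k\neq m$, and similarly $i\neq l$ from the other block; this is why the odd case collapses to the single $n^{-2}$ term with no $\1_{\{i=l\}}$ or $\1_{\{k=m\}}$ alternatives. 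That observation is not a trivial by-product of ``$Q_1+Q_2$ even'' and needs to be made explicitly. As written, this is a sensible roadmap, not a proof of the bound.
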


\begin{prop}\label{A2}
For fixed $i,k,l,m$, we have that
\begin{flalign}
&\A_2(i,k,l,m)\leq\notag\\
&\leq \frac{p^4}{n^2}\, \frac{e^2}{\pi}\{\frac{1}{n^2\,p^3}\mathbbm{1}_{\{i\neq k,l\neq m\}}+\frac{1}{n\,p^3}\mathbbm{1}_{\{i=k,l\neq m\}}+\frac{1}{p^3}\mathbbm{1}_{\{i= k,l=m\}}\}(1+o(1))\,,\label{rateA2}
\end{flalign}
where $o(1)$ indicates a numerical sequence converging to zero, as $n\uparrow\infty$.
\end{prop}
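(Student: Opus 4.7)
The plan is to bound $E[((A^{p-1})_{ik})^2((A^{p-1})_{lm})^2]$ through a Wick-formula expansion combined with combinatorial walk counting. Expanding
\[
(A^{p-1})_{ik} = n^{-(p-1)/2}\sum_{i = v_0, v_1, \dots, v_{p-1} = k}\prod_{s=0}^{p-2} Y_{v_s v_{s+1}},
\]
and similarly for $(A^{p-1})_{lm}$, taking the product of four such factors and then the expectation reduces the problem, via Wick's formula, to a sum over pairings of the $4(p-1)$ Gaussian entries. Each pairing contributes a variance factor $(1/4)^{2(p-1)}$ and forces paired edges to coincide as unordered vertex pairs in the underlying walks.

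For each pairing $\sigma$, the number of compatible quadruples of walks is at most $n^{v(\sigma)}$, where $v(\sigma)$ counts the vertices left unconstrained after imposing the pairing identifications together with the endpoint constraints at $i,k,l,m$. The maximum of $v(\sigma)$ is attained by non-crossing ``tree-like'' pairings within each of the two squared factors, and such planar pairings are enumerated by Catalan numbers $C_{p-1}$. Crucially, each endpoint identification ($i=k$ or $l=m$) frees one additional vertex from the summation and thus supplies an extra factor of $n$; this is the structural origin of the three cases in the claimed bound, producing the factor $1/n^2$, $1/n$, or $1$ depending on how many endpoint coincidences occur. Using the classical asymptotic $C_{p-1}/4^{p-1} = (1+o(1))/(\sqrt{\pi}(p-1)^{3/2})$, the leading planar contribution furnishes the $1/(\pi p^3)$ factor, while the residual constant $e^2$ emerges from Stirling- and binomial-type estimates that enumerate the ways in which the pairings inside each squared factor can be interleaved (sums of the form $(1+1/p)^{p}\to e$ are typical here).

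The main obstacle is controlling the non-planar pairings: these are subleading in $n$, but their count grows doubly factorially in $p$, and so they must be absorbed using the hypothesis $p = o(n^{4/15})$. I would handle them by grouping pairings according to a ``defect'' parameter measuring the number of excess vertex identifications beyond the planar baseline, then summing the resulting contributions using elementary estimates such as $(2k-1)!! \leq (2k/e)^{k}\sqrt{4k}$. Multiplying the final bound on $E[((A^{p-1})_{ik})^2((A^{p-1})_{lm})^2]$ by the prefactor $p^4/n^2$ appearing in the definition of $\A_2$ then yields the desired inequality \eqref{rateA2}.
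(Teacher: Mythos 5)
Your proposal is, in spirit, the same moment-method / walk-counting argument that drives the bounds in the paper's Appendix~A: the Wick expansion over pairings, the identification of non-crossing pairings inside each squared factor as the leading class, and the observation that each endpoint coincidence ($i=k$ or $l=m$) converts one endpoint constraint from a vertex-label constraint into a Dyck-path constraint, thereby freeing one vertex and yielding the extra factor of $n$. That last observation is exactly the structural origin of the $n^{-2}$, $n^{-1}$, $1$ stratification, and the Catalan asymptotic $C_{p-1}/4^{p-1}\sim(\sqrt{\pi}(p-1)^{3/2})^{-1}$ correctly produces the $(\pi p^3)^{-1}$ factor.

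Two remarks on points that are under-specified and would need care in the write-up. First, the constant $e^2$ is more naturally traced to the \emph{Stirling upper bound} $n!\le e\sqrt{n}(n/e)^n$ applied to the central binomial coefficient inside $C_{p-1}$ (each Catalan factor picks up one factor of $e$, and the square gives $e^2$), rather than to ``interleavings'' or to $(1+1/p)^p$ limits; as stated, that part of the sketch is not quite the right explanation. Second, your plan only addresses pairings internal to each squared factor; a complete proof must also control the \emph{cross-factor} pairings (matching a Gaussian from $((A^{p-1})_{ik})^2$ with one from $((A^{p-1})_{lm})^2$) and the non-planar pairings within a factor, and show that their total contribution is $o(1)$ relative to the block-diagonal planar count. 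Your defect-parameter idea is the right mechanism for this, but one must verify that the resulting geometric-type sum in the defect is dominated by a ratio like $p^2/n$ (which indeed vanishes under $p=o(n^{4/15})$); this is where the bulk of the technical work actually resides and it is only gestured at here.
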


\subsubsection{Proof of Theorem \ref{WignerTHM} assuming Proposition \ref{A1} and Proposition \ref{A2}}

Simply plugging the results from Proposition \ref{A1} and Proposition \ref{A2} into \eqref{final}, we have that
\begin{flalign*}
&d(F_n,N)^2\leq\,\pi^2\,  \sum_{i,k,l,m=1}^n\(\frac{4^4\,p^4}{n^4}\sum_{Q_1,Q_2=0}^{2p-4}\,\(Q_1+1\)\(Q_2+1\)\times\right.\\
&\times\{\[\frac{e}{2\,\sqrt{2\,\pi\,p^3}}\mathbbm{1}_{\{i=l\}}+\frac{e}{2\,n\,\sqrt{2\,\pi\,p^3}}\mathbbm{1}_{\{i\neq l\}}\]\mathbbm{1}_{\{Q_1=Q_2=0\}}+\right.\\
&\qquad\quad+2\,\[\frac{e^2}{\sqrt{2\,\pi^2\,p^3\, Q^3}}\mathbbm{1}_{\{i= l,k=m\}}+\frac{2\,e^2}{n\,\sqrt{2\,\pi^2\,p^3\, Q^3}}\mathbbm{1}_{\{i\neq l,k=m\}}+\right.\\
&\qquad\qquad\qquad\left.+\frac{e^2}{n^2\,\sqrt{2\,\pi^2\,p^3\, Q^3}}\mathbbm{1}_{\{i\neq l,k\neq m\}}\]\mathbbm{1}_{\{Q_1, Q_2 \text{ even, } Q_1\neq0\}}+\\
&\left.\left.\qquad\qquad\qquad\qquad\qquad\qquad+\[\frac{e^2}{n^2\,\sqrt{2\,\pi^2\,p^3\, Q^3}}\]\mathbbm{1}_{\{Q_1, Q_2 \text{ odd}\}}\}\)^{1/2}\,\times\\
&\qquad\times\,\(\frac{p^4}{n^2}\, \frac{e^2}{\pi}\{\frac{1}{n^2\,p^3}\mathbbm{1}_{\{i\neq k,l\neq m\}}+\frac{1}{n\,p^3}\mathbbm{1}_{\{i=k,l\neq m\}}+\frac{1}{p^3}\mathbbm{1}_{\{i= k,l=m\}}\}\)^{1/2}(1+o(1))\nonumber\\
&\leq \,16\,\pi^2\, \sum_{i,k,l,m=1}^n\(\frac{p^4}{n^4}\frac{e}{2\,n\,\sqrt{2\,\pi\,p^3}}\mathbbm{1}_{\{i\neq l\}}+\frac{4\,p^6}{n^4}\sum_{Q_1,Q_2=1}^{2p-4}\frac{e^2}{n^2\,\sqrt{2\,\pi^2\,p^3}}\mathbbm{1}_{\{i\neq l,k\neq m\}}\)^{1/2}\,\times\\
&\qquad\qquad\qquad\qquad\times\,\(\frac{p^4}{n^2}\frac{e^2}{\pi}\frac{1}{n^2\,p^3}\mathbbm{1}_{\{i\neq k,l\neq m\}}\)^{1/2}(1+o(1))\nonumber\\
&\leq \,16\,\pi^2\,\(\frac{e^{3/2}}{(2\,\pi)^{3/4}} \,\frac{p^{7/4}}{\sqrt{\,n\,}} + \frac{4\,e^2}{2^{1/4}\,\pi} \,\frac{p^{15/4}}{\,n\,}\)\,(1+o(1))\,\, .
\end{flalign*}
Now, it is straightforward to see that the bound goes to zero if $p=o(n^{4/15})$. 

Moreover, if $p=o(n^{1/4})$, then
$$
\frac{2\,e}{2^{1/8}\,\sqrt{\pi}} \,\frac{p^{15/8}}{\sqrt{\,n\,}}=o\(\frac{e^{3/4}}{(2\,\pi)^{3/8}} \,\frac{p^{7/8}}{n^{1/4}}\),
$$
while if $p=O(n^\alpha)$, with $\alpha \in [1/4,15/8)$, then
$$
\frac{e^{3/4}}{(2\,\pi)^{3/8}} \,\frac{p^{7/8}}{n^{1/4}}=o\(\frac{2\,e}{2^{1/8}\,\sqrt{\pi}} \,\frac{p^{15/8}}{\sqrt{\,n\,}}\);
$$
as a consequence, equation \eqref{cases-Wig} is proved and the theorem is established.
\hfill \mbox{\raggedright \rule{0.1in}{0.1in}}


\section*{Acknowledgements} 

This work has been extracted from the author's PhD thesis, defended on November 29th 2018 at the University of Luxembourg, under the supervision of Giovanni Peccati. The author would like to thank him for the constant help and support. The author also thanks Christian D\"obler, Maurizia Rossi and Guangqu Zheng for useful discussions. Finally, the author thanks an anonymous referee that with his/her insightful and careful comments considerably improved the present paper.

This work was part of the AFR research project {\it High-dimensional challenges and non-polynomial transformations in probabilistic approximations} (HIGH-NPOL) funded by FNR -- Luxembourg National Research Fund.


\bibliographystyle{alpha}

\bibliography{Bibliography}


\end{document}